\documentclass[english,10pt]{article}
\usepackage{geometry}
\geometry{verbose,tmargin=2cm,bmargin=2cm,lmargin=2cm,rmargin=2cm}
\usepackage{float}
\usepackage{mathrsfs,bm}
\usepackage{amsmath}
\usepackage{amssymb}
\usepackage{fancyhdr}
\usepackage{epsfig}
\usepackage{subfig}
\usepackage{amsthm}
\usepackage{mathrsfs}
\usepackage{amsfonts}
\usepackage{mathrsfs}
\usepackage{enumerate}

\makeatletter

\floatstyle{ruled}
\newfloat{algorithm}{tbp}{loa}
\providecommand{\algorithmname}{Algorithm}
\floatname{algorithm}{\protect\algorithmname}



\@ifundefined{definecolor}{\@ifundefined{definecolor}
 {\@ifundefined{definecolor}
 {\usepackage{color}}{}
}{}
}{}

\usepackage[auth-sc,affil-it]{authblk}
\newcommand{\bbR}{\mathbb R}

\textwidth   5.8in \textheight  9in \oddsidemargin  0.3in
\evensidemargin 0.3in \topmargin 0in

\newtheorem{theorem}{Theorem}[section]
\newtheorem{lem}{Lemma}[section]
\newtheorem{rem}{Remark}[section]
\newtheorem{prop}{Proposition}[section]

\newcounter{hypA}
\newenvironment{hypA}{\refstepcounter{hypA}\begin{itemize}
  \item[({\bf A\arabic{hypA}})]}{\end{itemize}}

\newcommand{\EE}{\mathbb{E}}

\newcommand{\QQ}{\mathbb{Q}}
\newcommand{\var}{\mathrm{Var}}

\renewcommand{\phi}{\varphi}

\usepackage{babel}


\begin{document}

\begin{center}

{\Large \textbf{On the Behaviour of the Backward Interpretation of Feynman-Kac Formulae under Verifiable Conditions}}

\bigskip

BY AJAY JASRA 

{\footnotesize Department of Statistics \& Applied Probability,
National University of Singapore, Singapore, 117546, SG.}\\
{\footnotesize E-Mail:\,}\texttt{\emph{\footnotesize staja@nus.edu.sg}}
\end{center}

\begin{abstract}
In the following article we consider the time-stability associated to the sequential Monte Carlo (SMC) estimate of the backward interpretation of Feynman-Kac Formulae.
This is particularly of interest in the context of performing smoothing for hidden Markov models (HMMs). We prove a central limit theorem (CLT) under weaker assumptions 
than adopted in the literature. We then show that the associated asymptotic variance expression, for additive functionals grows at  most linearly in time, under hypotheses
that are weaker than those currently existing in the literature. The assumptions are verified for some state-space models.\\
\textbf{Keywords}: Particle Filter, Central Limit Theorem, Smoothing.
\end{abstract}

\section{Introduction} 

Feynman-Kac formulae provide a very general description of several models, such as hidden Markov models (see e.g.~\cite{cappe}), used in statistics, physics, computational biology and many more; see \cite{delmoral}.
For a measurable space $(\mathsf{X},\mathcal{B}(\mathsf{X}))$, $f:\mathsf{X}\rightarrow\mathbb{R}$ (bounded for now), the  Feynman-Kac formula associated to the  $n$-time marginal, $n\geq 1$ is:
$$
\eta_n(f) := \frac{\gamma_n(f)}{\gamma_n(1)}
$$
with, for $\mu$ a probability measure on $\mathsf{X}$, $G_n:\mathsf{X}\rightarrow\mathbb{R}_+$ (bounded), $n\geq 0$, $M_n:\mathsf{X}\times\mathcal{B}(\mathsf{X})\rightarrow[0,1]$, $n\geq1$
\begin{equation}
\gamma_n(f) := \int_{\mathsf{X}^{n+1}}f(x_{n}) \Big[\prod_{p=0}^{n-1} G_p(x_p)\Big] \mu(dx_0) \prod_{p=1}^n M_{p}(x_{p-1},dx_p)\label{eq:gamma_def}.
\end{equation}
We take $\eta_0=\mu$. In the context of HMMs, $\eta_n$ represents the predictor, equivalently, the conditional distribution of the signal given the observations up-to time $n-1$.
In many practical applications, such as the smoothing problem in HMMs, one is interested in the formula, for $F_n:\mathsf{X}^{n+1}\rightarrow\mathbb{R}$ (bounded for now), 
$$
\mathbb{Q}_n(F_n) = \frac{\int_{\mathsf{X}^{n+1}}F_n(x_{0},\dots,x_n) \Big[\prod_{p=0}^{n-1} G_p(x_p)\Big] \mu(dx_0) \prod_{p=1}^n M_{p}(x_{p-1},dx_p)}{\int_{\mathsf{X}^{n+1}} \Big[\prod_{p=0}^{n-1} G_p(x_p)\Big] \mu(dx_0) \prod_{p=1}^n M_{p}(x_{p-1},dx_p)}.
$$
In practice this formula, as well as that for the predictor is unavailable analytically and one must resort to numerical approximation procedures, in order to compute it.
We remark that $\mathbb{Q}_n(F_n)$ is of interest, not only for smoothing for HMMs, but many other application areas; see for instance \cite{dds1} and the references therein.
In this article we focus on the numerical approximation of $\mathbb{Q}_n(F_n)$ and simultaneously $\eta_n(f)$. The latter task is often done quite well using SMC methods, as we now discuss.

SMC methods are designed to approximate a sequence of probability distributions of increasing dimension. The method uses $N\geq 1$
samples (or particles) that are generated in parallel, and are propagated via importance sampling (i.e.~via Markov proposals and importance weights) and resampling methods. The approach can provide estimates of expectations with respect to this sequence of distributions of increasing accuracy as $N$ grows. Standard SMC methodology is by now very well understood with regards to its convergence properties and several consistency results have been proved (see  e.g.~\cite{delmoral,douc}) along with the stability in time of the error of the algorithm \cite{douc2,whiteley} in the context of filtering for HMMs. These latter results are particularly important
as due to the sequential in time nature of the inference; one does not want the errors over time to accumulate.  

As noted above, SMC can be very useful for approximating $\eta_n(f)$. However, it is well known due to the path degeneracy problem (see \cite{doucet}) that the standard SMC approach, of cost $\mathcal{O}(N)$ per time step, for approximating $\mathbb{Q}_n(F_n)$ performs very badly.
For example, consider the CLT for the standard SMC approximation of $\mathbb{Q}_n(F_n)$, call it $\mathbb{Q}^{N,S}_n(F_n)$ with $F_n(x_0,\dots,x_n)=\sum_{p=0}^n f_p(x_p)$, $f_p:\mathsf{X}\rightarrow\mathbb{R}$, (additive functionals - this is of particular interest in application areas):
$$
\sqrt{N}[\mathbb{Q}^{N,S}_n(F_n)-\mathbb{Q}_n(F_n)] \Rightarrow\mathcal{N}(0,\sigma_n^{2,S}(F_n))
$$
where $\Rightarrow$ denotes convergence in distribution as $N\rightarrow+\infty$ and $\mathcal{N}(0,\sigma_n^{2,S}(F_n))$ is a one-dimensional Gaussian distribution with zero mean and variance $\sigma^{2,S}_n(F_n)$.
\cite{poya} show that, under strong assumptions, $\sigma^{2,S}_n(F_n)\geq c(n)$, with $c(n)$, $\mathcal{O}(n^2)$, i.e.~grows quadratically in the time parameter.

One SMC approach designed to deal with these afore-mentioned issues is that of the forward filtering backward smoothing algorithm (FFBS) of  \cite{doucet1,godsill} and in particular
the SMC approximation of the backward interpretation of Feynman-Kac formulae, write this $\mathbb{Q}^{N}_n(F_n)$. This is a `forward only' approximation of the FFBS algorithm, which is of cost $\mathcal{O}(N^2)$ per time step, 
and several convergence results for this algorithm (and FFBS), including a CLT are proved in  \cite{dds1,douc,dubarry}; the assumptions used are fairly strong and do not always apply on non-compact state-spaces $\mathsf{X}$. The $\mathcal{O}(N^2)$ cost per time step is counter-balanced by  the time-behaviour of (an appropriateley defined) error in approximating $\mathbb{Q}_{n}(F_n)$ for $F_n$ additive; it can be
no worse than linear in time (see e.g.~\cite{dubarry}), versus the $\mathcal{O}(n^2)$ for standard SMC.  For instance, \cite{dds1} show that for $F_n$ additive, as $\sqrt{N}[\mathbb{Q}^{N}_n(F_n)-\mathbb{Q}_n(F_n)] \Rightarrow\mathcal{N}(0,\sigma^{2}_n(F_n))$, under some strong hypotheses:
$$
\sigma^{2}_n(F_n) \leq c(n+1)
$$
with $c<+\infty$ not depending upon $n$.
As already remarked, these theoretical results are derived under strong assumptions: In this work we weaken the hypotheses used in previous articles (such as \cite{dds1,douc,dubarry}). 
A related idea, the forward filtering backward simulation algorithm in \cite{douc} has cost $\mathcal{O}(N)$ but we do not consider it in this article.

In the analysis of SMC algorithms, time-stability is often posed as follows. 
Writing $\eta_n^N(f)$ as the SMC approximation of $\eta_n(f)$, one has under minimal assumptions that $\sqrt{N}[\eta_n^N(f)-\eta_n(f)]\Rightarrow\mathcal{N}(0,\vartheta^{2}_n(f))$ and in the literature an often proved result, under additional assumptions,
is that
$$
\vartheta^{2}_n(f) \leq c
$$
where $c$ does not depend upon $n$.
The time stability of SMC has been studied in many papers (e.g.~\cite{delmoral_stab,heine}), but, only recently have assumptions
been weakened, for example in \cite{douc2,vanhandel,whiteley}. 
The assumptions used in the early work of \cite{delmoral_stab} relied on very strong mixing assumptions associated to the underlying Markov chain of the Feynman-Kac formula. Significant efforts were made to weaken this assumption
and recent work of \cite{douc2,whiteley} (see also \cite{whiteley1}). These works, in the context of the asymptotic variance in the CLT associated to the SMC approximation of the $n-$time Feynman-Kac marginal, has used local Doeblin (see \cite{douc3}) and multiplicative drift condions (see \cite{kont})
to provide more verifiable assumptions for the stability of SMC.
We use similar assumptions to \cite{whiteley} to weaken the assumptions used in \cite{dds1,douc1} for: 
\begin{enumerate}
\item{Proving a CLT for the SMC approximation of  the backward interpretation of Feynman-Kac formulae (Theorem \ref{theo:clt}), that is
$$
\sqrt{N}[\mathbb{Q}^{N}_n(F_n)-\mathbb{Q}_n(F_n)] \Rightarrow\mathcal{N}(0,\sigma^{2}_n(F_n)).
$$
}
\item{Giving a linear-in-time bound on the associated asymptotic variance expression when the function is additive (Theorem \ref{theo:asymp_var_bound}), that is, for $F_n(x_0,\dots,x_n)=\sum_{p=0}^n f_p(x_p)$
$$
\sigma^{2}_n(F_n) \leq c(n+1)
$$
where $c$ does not depend upon $n$.} 
\end{enumerate}

This article is structured as follows. In Section \ref{sec:prelim} we give our notations, the algorithm and estimates along with our assumptions. In Section \ref{sec:clt} the CLT is proved. 
In Section \ref{sec:av_cont} we prove the linear in time increase of the asymptotic variance expression for additive functions. In Section \ref{sec:verify} we give an example of an HMM were
our assumptions hold.
The appendix contains technical results for the proofs of the CLT and asymptotic variance and is split into two Sections.

\section{Preliminaries}\label{sec:prelim}

\subsection{Notations}

For a kernel $M:\mathsf{X}\times\mathcal{B}(\mathsf{X})\rightarrow\mathbb{R}_+$ and $\sigma-$finite measure $\mu$ on $(\mathsf{X},\mathcal{B}(\mathsf{X}))$ $\mu M(\cdot):=\int_{\mathsf{X}} \mu(dx) M(x,\cdot)$.
For a function $\varphi:\mathsf{X}\rightarrow\mathbb{R}$ and kernel $M$ (resp.~signed measure $\mu$), $M(\varphi)(x):=\int_{\mathsf{X}}\varphi(y)M(x,dy)$ (resp.~$\mu(\varphi):=\int \varphi(y)\mu(dy)$).
For a given function $V:\mathsf{X}\mapsto[1,\infty)$ we denote by $\mathscr{L}_V$
the class of functions $\varphi:\mathsf{X}\rightarrow\bbR$ for which
$$
\|\varphi\|_V := \sup_{x\in \mathsf{X}}\frac{|\varphi(x)|}{V(x)} < +\infty\ .
$$
When $V\equiv 1$ we write $\|\varphi\|_{\infty} := \sup_{x\in \mathsf{X}}|\varphi(x)|$.
We also denote, for a probability measure $\mu$, $\|\mu\|_{V}:=\sup_{|\varphi|\leq V}|\mu(\varphi)|$. The probability measures on $\mathsf{X}$ are denoted $\mathcal{P}$. For $\mu\in\mathcal{P}$ such that $\mu(V)<+\infty$
we denote $\mu\in\mathcal{P}_V$. 
Throughout $c$ is used to denote a constant whose meaning may change, depending upon the context; any (important) dependencies are written as $c(\cdot)$. 
The bounded, real-valued and measurable functions on a space $\mathsf{Z}$ are written $\mathbb{B}_b(\mathsf{Z})$. The notation $x_{k:n}=(x_k,\dots,x_n)$ is used,
with $k<n$.

Recall \eqref{eq:gamma_def} which is defined in terms of potentials $G_n$ and Markov kernels $M_n$.
Throughout the article it is assumed, for a $\sigma-$finite measure $\lambda$ on $\mathsf{X}$ (typically Lebesgue) and each $n\geq 1$:
$$
M_{n}(x_{n-1},dx_n) = H_{n}(x_{n-1},x_n)\lambda(dx_n)
$$
where $H_n:\mathsf{X}^2\rightarrow\mathbb{R}_+$, with $\int_{\mathsf{X}}H_{n}(x_{n-1},x_n)\lambda(dx_n)=1~\forall x_{n-1}\in\mathsf{X}$. We also introduce the semi-group for $n\geq 1$:
$$
Q_{n}(x_{n-1},dx_n) := G_{n-1}(x_{n-1})M_{n}(x_{n-1},dx_n)
$$
with, for $0\leq p\leq n$, $f:\mathsf{X}\rightarrow\mathbb{R}$, $Q_{p,n}(f)(x) := \int f(x_n)  \prod_{q=p+1}^n Q_{q}(x_{q-1},dx_q)$ with the convention $Q_{p,p}=Id$, the identity operator.
We use this semi-group notation for operators that are introduced later on. We will write weak convergence (as $N$ the number of samples grows) as $\Rightarrow$
and convergence in probability as $\rightarrow_{\mathbb{P}}$. We write the $d-$dimensional Gaussian distribution, with mean vector $\mu$ and covariance matrix
$\Sigma$ as $\mathcal{N}_d(\mu,\Sigma)$ and if $d=1$ we drop subscript $d$.

\subsection{Algorithm and Estimate}

The SMC algorithm samples from the joint law
\begin{equation*}
\mathbb{P}\big(\,d(x_0^{1:N},x_1^{1:N},\ldots,x_n^{1:N})\,\big) = \Big(\prod_{i=1}^N \eta_0(dx_0^i)\Big)\prod_{p=1}^n \prod_{i=1}^N\Phi_{p}(\eta_{p-1}^N)(dx_p^i)\ ,
\end{equation*}
where $x_q^{1:N} = (x_q^1,\dots,x_q^N)\in\mathsf{X}^N$ ($0\leq q\leq n$), $\eta_n^{N}$ is the empirical measure 
$\frac{1}{N}\sum_{i=1}^N \delta_{x_n^i}$
and the operator $\Phi_n: \mathcal{P} \to \mathcal{P}$ maps a probability distribution $\mu\in\mathcal{P}$ to the probability measure $\Phi_n(\mu) \in \mathcal{P}$ defined by
\begin{equation*}
\Phi_n(\mu)(dy) = \frac{\mu(G_{n-1}M_{n})(dy)}{\mu(G_{n-1})} \ .
\end{equation*}
The estimate of $\gamma_n(f)$ is $\gamma_n^N(f)=[\prod_{q=0}^{n-1}\eta_q^N(G_q)] \eta_{n}^N(f)$.
Various results have been proved about the convergence associated to $\eta_n^N(\cdot)$ (resp.~$\gamma_n^N(\cdot)$) to $\eta_n(\cdot)$ (resp.~$\gamma_n(\cdot)$); see for instance \cite{delmoral}.

Let $F_n:\mathsf{X}^{n+1}\rightarrow\mathbb{R}$, we will study the SMC approximation of
$$
\mathbb{Q}_n(F_n) = \frac{\int_{\mathsf{X}^{n+1}}F_n(x_{0:n}) \Big[\prod_{p=0}^{n-1} G_p(x_p)\Big] \mu(dx_0) \prod_{p=1}^n M_{p}(x_{p-1},dx_p)}{\int_{\mathsf{X}^{n+1}} \Big[\prod_{p=0}^{n-1} G_p(x_p)\Big] \mu(dx_0) \prod_{p=1}^n M_{p}(x_{p-1},dx_p)}.
$$
Now the backward interpretation (see e.g.~\cite{dds1}) is
$$
\mathbb{Q}_n(F_n)  = \int_{\mathsf{X}^{n+1}} F_n(x_{0:n}) \eta_n(dx_n) \mathcal{M}_n(x_n,dx_{0:n-1})
$$
where
\begin{eqnarray}
\mathcal{M}_n(x_n,dx_{0:n-1}) & = & \prod_{q=1}^n M_{q,\eta_{q-1}}(x_q,dx_{q-1})\label{eq:m_back_path}\\
M_{q,\eta_{q-1}}(x_q,dx_{q-1}) & = & \frac{G_{q-1}(x_{q-1})H_q(x_{q-1},x_q)\eta_{q-1}(dx_{q-1})}{\eta_{q-1}(G_{q-1}H_q(\cdot,x_q))}\nonumber
\end{eqnarray}
we write $\mathcal{M}_n^N$ in \eqref{eq:m_back_path}, when each $\eta_0,\dots,\eta_{n-1}$ are replaced by the empirical versions.
The SMC approximation of  $\mathbb{Q}_n(\cdot)$, written $\mathbb{Q}_n^N(\cdot)$ is
$$
\mathbb{Q}_n^N(dx_{0:n})  =  \eta_n^N(dx_n) \prod_{q=1}^{n} M_{q,\eta_{q-1}^N}(x_q,dx_{q-1})
$$
where the empirical measures $\eta_{q-1}^N$ are defined above.
If  $F_n(x_{0:n})=\sum_{p=0}^nf_p(x_p)$, $f_p:\mathsf{X}\rightarrow\mathbb{R}$, then setting $F_0^N=f_0$, then the $\mathcal{O}(N^2)$ approximation is
$$
\mathbb{Q}_n^N(F_n) = \eta_n^N(F_n^N)
$$
where
$$
F_n^N(x) = f_n(x) + \sum_{i=1}^N \frac{G_{n-1}(x_{n-1}^i) H_n(x_{n-1}^i,x)}{\sum_{j=1}^N G_{n-1}(x_{n-1}^j) H_n(x_{n-1}^j,x)} F_{n-1}^N(x_{n-1}^i).
$$
This is particularly useful for the smoothing problem associated to HMMs.

\subsection{Assumptions}

We make the following hypotheses. (A1-2), (A4-6) are (H1-5) in \cite{whiteley}, except slightly modified to the density notations which naturally occur in many application areas.
(A\ref{hyp:6}) appears to be needed under our analysis, but can be verified in practice. It is not dissimilar to part of (H1) in \cite{delm_g} and, under the other assumptions of this article
could be verified if
$$
H_{n}\in\mathscr{L}_{\overline{v}^{\beta_1}} \quad \textrm{and} \quad \Big(\inf_{x\in C_d} G_{n-1}(x) H_n(x,y)\Big)^{-1}\in\mathscr{L}_{v^{\beta_2}}
$$
with $\beta_1,\beta_2>0$ and $\alpha=\beta_1+\beta_2$, $\alpha$ as in (A\ref{hyp:6}).
A discussion of the assumptions and comparison to \cite{douc3} can be found in \cite{whiteley}. The assumptions are, in general, weaker than those used in \cite{dds1,douc,dubarry} and can be verified on non-compact state-spaces.

\begin{hypA}\label{hyp:1}
There exists a $V:\mathsf{X}\rightarrow[1,\infty)$ unbounded and constants $\delta\in(0,1)$ and $\underline{d}\geq 1$ with the following properties.
For each $d\in(\underline{d},+\infty)$ there exists a $b_d<+\infty$ such that $\forall x\in\mathsf{X}$
$$
\sup_{n\geq 1} Q_n(e^V)(x) \leq e^{(1-\delta)V(x) + b_d\mathbb{I}_{C_d}(x)}
$$
where $C_d=\{x\in\mathsf{X}:V(x)\leq d\}$.
\end{hypA}
\begin{hypA}\label{hyp:3}
$\mu\in\mathcal{P}_v$, with $v=e^{V}$.
\end{hypA}
\begin{hypA}\label{hyp:6}
For every $\alpha\in(0,1/2)$:
$$
\sup_{n\geq 1} \frac{G_{n-1}(x)H_n(x,y)}{\eta_{n-1}(G_{n-1}H_n(\cdot,y))} \in \mathscr{L}_{\overline{v}^{\alpha}}
$$
with $\overline{v}(x,y)^{\alpha} = v(x)^{\alpha}v(y)^{\alpha}$.
\end{hypA}
\begin{hypA}\label{hyp:2}
With $\underline{d}$ as in (A\ref{hyp:1}), for each $d\in[\underline{d},\infty)$
$$
G_{n-1}(x)H_n(x,y) > 0 \quad \forall x,y\in\mathsf{X}, n\geq 1
$$
with $0<\int_{C_d}\lambda(dy)<+\infty$
and there exist $\tilde{\epsilon}_d^{-}>0$ such that
$$
\inf_{n\geq 1} G_{n-1}(x)H_n(x,y) \geq \tilde{\epsilon}_d^{-}, \quad \forall x,y\in C_d.
$$
In addition $\nu_d(dy):=\lambda(dy)\mathbb{I}_{C_d}(y)/\int_{C_d}\lambda(dy)\in\mathcal{P}_v$.
\end{hypA}
\begin{hypA}\label{hyp:4}
With $\underline{d}$ as in (A\ref{hyp:1}),  and $\tilde{\epsilon}_d^-$ as in (A\ref{hyp:2}), for each $d\in[\underline{d},\infty)$ there exist $\tilde{\epsilon}_d^+\in[\tilde{\epsilon}_d^-,\infty)$ such that
$$
\sup_{n\geq 1} G_{n-1}(x)H_n(x,y) \leq \tilde{\epsilon}_d^+, \quad  \forall x,y\in C_d
$$
\end{hypA}
\begin{hypA}\label{hyp:5}
$\sup_{n\geq 0} \sup_{x\in\mathsf{X}} G_n(x) < +\infty$.
\end{hypA}

\section{Central Limit Theorem}\label{sec:clt}

The asymptotic variance in the CLT for the forward-only smoothing (resp.~FFBS) is, under some conditions, \cite[Theorem 3.1]{dds1} (see also \cite{douc}):
$$
\sigma^2_n(F_n) := \sum_{p=0}^n \eta_p\bigg(\bigg[h_{p,n}\Big\{P_{p,n}(F_n)-\frac{\eta_p(D_{p,n}(F_n))}{\eta_p(D_{p,n}(1))}\Big\}\bigg]^2\bigg) 
$$
for the predictor. The operators are, for $0\leq p\leq n$
\begin{eqnarray}
h_{p,n}(x_p) & = & \frac{Q_{p,n}(1)(x_p)}{\eta_p(Q_{p,n}(1))}\nonumber\\
P_{p,n}(F_n)(x_p) & = & \frac{D_{p,n}(F_n)(x_p)}{D_{p,n}(1)(x_p)}\nonumber\\
D_{p,n}(F_n)(x_p) & = & \int \mathcal{M}_p(x_p,dx_{0:p-1})\mathcal{Q}_{p,n}(x_p,dx_{p+1:n})F_n(x_{0:n})\nonumber\\
\mathcal{Q}_{p,n}(x_p,dx_{p+1:n}) & = & \prod_{q=p}^{n-1} Q_{q+1}(x_q,dx_{q+1})\label{eq:q_path_def}).
\end{eqnarray}
With the conventions $D_{0,n}=\mathcal{Q}_{0,n}$ and $D_{n,n}=\mathcal{M}_n$.
We give the CLT under weaker assumptions than considered by \cite{dds1,douc}, but only for bounded functions; we note that (A\ref{hyp:1}) and (A\ref{hyp:6}) need not be time-uniform, but to connect with the next Section, we make them time-uniform. 
Indeed, one can pose (A\ref{hyp:1}) as $Q_n(v)\leq c(n) v^{1-\delta}$.
We suppose that for any $n\geq 0$, $\|G_n\|_{\infty}<+\infty$, below. 

\begin{theorem}\label{theo:clt}
Assume (A\ref{hyp:1}-\ref{hyp:6}).  Suppose that for each $n\geq 0$, $1/G_{n}\in\mathscr{L}_{v^{\delta/2}}$, with $\delta$ as in (A\ref{hyp:1}), then for any $n\geq 0$, $F_n\in\mathbb{B}_b(\mathsf{X}^{n+1})$
$$
\sqrt{N} [\mathbb{Q}_n^N - \mathbb{Q}_n](F_n) \Rightarrow \mathcal{N}(0,\sigma^2_n(F_n)).
$$
\end{theorem}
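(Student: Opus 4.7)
The plan is to reduce the statement to the classical filter CLT for $\eta_p^N$ applied to carefully chosen centered test functions, and then to assemble the contributions via a martingale argument. Write $\mathbb{Q}_n^N(F_n)=\eta_n^N(F_n^N)$ and $\mathbb{Q}_n(F_n)=\eta_n(F_n^\star)$, where $F_n^\star(x_n):=\int\mathcal{M}_n(x_n,dx_{0:n-1})F_n(x_{0:n})$, and split
$$
\mathbb{Q}_n^N(F_n)-\mathbb{Q}_n(F_n)=[\eta_n^N-\eta_n](F_n^\star)+\eta_n^N\bigl(F_n^N-F_n^\star\bigr).
$$
The first term is already in the desired form. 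For the second, expand $\mathcal{M}_n^N-\mathcal{M}_n$ telescopically over $q=1,\ldots,n$, replacing $\eta_{q-1}$ by $\eta_{q-1}^N$ one step at a time. Each resulting summand has the form $[\eta_{q-1}^N-\eta_{q-1}](\Psi_{q,n}^N)$ for an explicit random function $\Psi_{q,n}^N$ that converges in probability (uniformly in its arguments) to a deterministic $\Psi_{q,n}$. Collecting and using $D_{p,p}=\mathcal{M}_p$, $D_{n,n}=\mathcal{M}_n$, one arrives at
$$
\sqrt{N}[\mathbb{Q}_n^N-\mathbb{Q}_n](F_n)=\sum_{p=0}^{n}\sqrt{N}[\eta_p^N-\eta_p](\varphi_{p,n})+R_n^N,
$$
where $R_n^N\to_{\mathbb{P}}0$ and
$$
\varphi_{p,n}(x_p)=h_{p,n}(x_p)\Bigl\{P_{p,n}(F_n)(x_p)-\frac{\eta_p(D_{p,n}(F_n))}{\eta_p(D_{p,n}(1))}\Bigr\},
$$
which satisfies $\eta_p(\varphi_{p,n})=0$ (using $D_{p,n}(1)=Q_{p,n}(1)$ and $\eta_p(h_{p,n})=1$) and matches the variance formula.

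Next I would apply the $\mathscr{L}_v$-version of the particle-filter CLT used in \cite{whiteley}: under (A\ref{hyp:1})--(A\ref{hyp:5}), for $\varphi\in\mathscr{L}_v$, $\sqrt{N}[\eta_p^N-\eta_p](\varphi)\Rightarrow\mathcal{N}(0,\vartheta_p^2(\varphi))$. Jointly over $p=0,\ldots,n$ the vector $(\sqrt{N}[\eta_p^N-\eta_p](\varphi_{p,n}))_{0\le p\le n}$ admits a martingale-difference representation in the natural filtration $\mathcal{F}_p^N=\sigma(x_0^{1:N},\ldots,x_p^{1:N})$, for which both the Lindeberg condition and the convergence of conditional variances follow from the standard $L_2$ estimates for the mutation--selection flow. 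Summing yields a single centered Gaussian whose variance, after routine identification of cross-covariances as zero, equals $\sigma_n^2(F_n)$.

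The main obstacle is to verify that $\varphi_{p,n}\in\mathscr{L}_v$ and that every remainder term in $R_n^N$ is $o_{\mathbb{P}}(1)$ after multiplication by $\sqrt{N}$; this is where the three key assumptions interact. From (A\ref{hyp:1}) one obtains $Q_{p,n}(1)(x)\le c(n)\,v(x)^{1-\delta}$ and $\eta_p(Q_{p,n}(1))$ bounded away from zero, so that $h_{p,n}\in\mathscr{L}_{v^{1-\delta}}\subset\mathscr{L}_v$. Assumption (A\ref{hyp:6}) controls each backward kernel $M_{q,\eta_{q-1}}$ by $\overline{v}^{\alpha}$ for arbitrarily small $\alpha>0$, so that iterating the product in $\mathcal{M}_p$ together with the boundedness of $F_n$ keeps $P_{p,n}(F_n)$ uniformly bounded. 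The new hypothesis $1/G_n\in\mathscr{L}_{v^{\delta/2}}$ is needed because expanding the telescopic differences produces ratios of the form $\eta_{q-1}(G_{q-1}H_q(\cdot,y))/(G_{q-1}(x)H_q(x,y))$, whose inverses inject factors of $1/G$; a careful accounting of exponents (choosing the $\alpha$ in (A\ref{hyp:6}) small enough that $\alpha+\delta/2<1-\delta$) shows that $\varphi_{p,n}$ and every intermediate $\Psi_{q,n}^N$ lies in $\mathscr{L}_v$ with total exponent strictly less than $1$. The $o_{\mathbb{P}}(N^{-1/2})$ control of $R_n^N$ then follows from $\mathbb{E}(([\eta_{q-1}^N-\eta_{q-1}](\varphi))^2)=O(N^{-1})$ uniformly over $\varphi$ with $\|\varphi\|_v$ bounded, combined with convergence $\|\Psi_{q,n}^N-\Psi_{q,n}\|_v\to_{\mathbb{P}}0$, in the spirit of \cite{delmoral,douc} but adapted to the $\mathscr{L}_v$ framework of \cite{whiteley}.
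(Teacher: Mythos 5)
Your overall plan—telescope the particle-approximated backward kernels, center against a bounded function, and pass to a Gaussian limit—shares the paper's broad shape, but the central decomposition you land on is not the one that makes the argument go through, and the gap is non-trivial.

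The paper, following \cite[Page 965]{dds1}, writes
$$
\sqrt{N}\,[\mathbb{Q}_n^N-\mathbb{Q}_n](F_n)=\sqrt{N}\sum_{p=0}^n\frac{\overline{\gamma}_p^N(1)}{\overline{\gamma}_n^N(1)}\,[\eta_p^N-\Phi_p(\eta_{p-1}^N)](\widehat{D}_{p,n}^N(F_n)),
$$
with $\Phi_p(\eta_{p-1}^N)$ in the centering. This is essential: given $\mathscr{F}_{p-1}^N$, the random variables $x_p^1,\dots,x_p^N$ are i.i.d.\ with law $\Phi_p(\eta_{p-1}^N)$, so $[\eta_p^N-\Phi_p(\eta_{p-1}^N)](\varphi)$ is \emph{exactly} a martingale increment, the conditional variance is $N^{-1}\,\mathrm{Var}_{\Phi_p(\eta_{p-1}^N)}(\varphi)$, and the cross-covariances between distinct $p$'s vanish by construction. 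That is what makes $\sum_p\mathrm{Var}_{\eta_p}(g_p)$ emerge as the limit variance once one shows $\Phi_p(\eta_{p-1}^N)\to\eta_p$. You instead claim a decomposition $\sum_p[\eta_p^N-\eta_p](\varphi_{p,n})+R_n^N$ with a ``martingale-difference representation'' and ``routine identification of cross-covariances as zero.'' This is not correct as stated: $[\eta_p^N-\eta_p](\varphi)$ is \textbf{not} a martingale increment with respect to $\mathscr{F}_p^N$, since $\EE[\eta_p^N(\varphi)\,|\,\mathscr{F}_{p-1}^N]=\Phi_p(\eta_{p-1}^N)(\varphi)\ne\eta_p(\varphi)$ in general; the errors $[\eta_p^N-\eta_p]$ at different times are genuinely correlated (the error at time $p$ propagates into the law of the particles at time $p+1$), so the cross-covariances do not vanish. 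Nor does each term $\sqrt{N}[\eta_p^N-\eta_p](\varphi_{p,n})$ have asymptotic variance $\mathrm{Var}_{\eta_p}(\varphi_{p,n})$—the standard CLT for $\eta_p^N$ gives the full $p$-step accumulated variance. So even if the telescope produces the $\varphi_{p,n}$ you describe, the assembly step asserting that the limiting variance is $\sum_p\eta_p(\varphi_{p,n}^2)=\sigma_n^2(F_n)$ is unjustified. One would have to either (a) re-expand $[\eta_p^N-\eta_p]$ again in terms of conditional-mean increments (at which point you have exactly the paper's decomposition), or (b) compute all the cross-covariances and show they conspire to give $\sigma_n^2$, which you have not done and which looks significantly harder.

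A secondary point: because $F_n\in\mathbb{B}_b$ and all $G_p$ are bounded, the functions $g_p=\widehat{D}_{p,n}(F_n)$ are themselves bounded, so the paper simply invokes the standard CLT \cite[Corollary 9.3.1]{delmoral} for bounded test functions; your appeal to an $\mathscr{L}_v$-version of the filter CLT (and the accompanying exponent bookkeeping such as $\alpha+\delta/2<1-\delta$, which anyway fails for $\delta>2/3$) is superfluous here. The unbounded machinery, the hypothesis $1/G_n\in\mathscr{L}_{v^{\delta/2}}$, (A\ref{hyp:6}), Proposition \ref{prop:wlln}, and Lemmas \ref{lem:clt_m}, \ref{lem:ui_term}, \ref{lem:v_cont} are needed only to show that the \emph{remainder} $\sum_p\sqrt{N}[\eta_p^N-\Phi_p(\eta_{p-1}^N)](g_p^N-g_p)\to_{\PP}0$, because there $g_p^N-g_p$ involves the random backward kernel ratios $G_{q-1}H_q/\eta_{q-1}^N(G_{q-1}H_q(\cdot,\cdot))$, which are not uniformly bounded. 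That distinction—bounded main part, unbounded-but-vanishing remainder—is the heart of the paper's argument and is blurred in your write-up.
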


\begin{proof}
By translation, one can assume that $\QQ_n(F_n)=0$.
For notational convenience, we introduce the rescaled quantity
$\widehat{D}_{p,n}(F_n) = D_{p,n}(F_n)/\eta_p Q_{p,n}(1)$ and its empirical analogue 
$\widehat{D}_{p,n}^N(F_n) = D_{p,n}^N(F_n)/\eta_p Q_{p,n}(1)$ 
for 
$D_{p,n}^N(F_n) = \int \mathcal{M}_p^N(x_p,dx_{0:p-1})\mathcal{Q}_{p,n}(x_p,dx_{p+1:n})F_n(x_{0:n})$.
From \cite[Page 965]{dds1} and Definition \cite[Page 962, eq.~(5.3)]{dds1}, it follows that
\begin{equation*}
\sqrt{N} \, [\mathbb{Q}_n^N - \mathbb{Q}_n](F_n)
=
\sqrt{N} \, 
\sum_{p=0}^n
\frac{\overline{\gamma}_p^N(1)}{\overline{\gamma}_n^N(1)} \, [\eta_p^N-\Phi_p(\eta_{p-1}^N)](\widehat{D}_{p,n}^N(F_n))
\end{equation*}
where we have set $\overline{\gamma}_p^N(1) = \gamma_p^N(1)/\gamma_p(1)$. For brevity, we set
$g_p(x_p) = \widehat{D}_{p,n}(F_n)(x_p)$ and 
$g_p^{N}(x_p)  = \widehat{D}_{p,n}^N(F_n)(x_p)$.
Since the quantity $\overline{\gamma}_p^N(1)$ converges to one in probability (see e.g.~Proposition \ref{prop:wlln}), Slutsky's Lemma shows that one can ignore the term $\overline{\gamma}_p^N(1) / \overline{\gamma}_n^N(1)$ for proving the CLT. The proof consists in exploiting the decomposition
\begin{align*}
\sum_{p=0}^n \sqrt{N}[\eta_p^N-\Phi_p(\eta_{p-1}^N)](g^N_p)
=
\sum_{p=0}^n &\sqrt{N}[\eta_p^N-\Phi_p(\eta_{p-1}^N)](g^N_p - g_p)
+\\
&\sum_{p=0}^n \sqrt{N}[\eta_p^N-\Phi_p(\eta_{p-1}^N)](g_p). 
\end{align*}
and prove that the first term on the R.H.S converges to zero in probability while the second term converges in laws towards a centred Gaussian distribution with variance $\sigma^2_n(F_n)$.
\begin{itemize}
\item
Note that the boundedness assumptions on the potentials $\{G_p\}_{p=1}^n$ and test function $F_n$ imply that $g_p\in \mathbb{B}_b(\mathsf{X})$ for $0\le p\le n$; by  standard results \cite[Corollary 9.3.1]{delmoral}, the sequence 
$
\sqrt{N}\,\big(\,[\eta_0^N-\eta_0](g_0)\, ,\dots, \,
[\eta_n^N-\Phi_n(\eta_{n-1}^N)](g_n)\,\big)
$
converges in laws towards a centred Gaussian vector with covariance matrix 
$\textrm{diag}\big( \, \var_{\eta_0}(g_0),\ldots,\var_{\eta_n}(g_n) \,\big)$. It follows that 
$\sum_{p=0}^n \sqrt{N}[\eta_p^N-\Phi_p(\eta_{p-1}^N)](g_p)$ converges in laws towards a centred Gaussian distribution with variance $\sum_{j=0}^n \var_{\eta_j}(g_j)$; this is just another way of writing $\sigma_n^2(F_n)$.

\item The last part of the proof consists in showing that  the term $\sum_{p=0}^n \sqrt{N}[\eta_p^N-\Phi_p(\eta_{p-1}^N)](g^N_p - g_p)$ converges to zero in probability; this quantity has zero expectation and standard manipulations show that its moment of order two is upper bounded by $\sum_{p=0}^n \mathbb{E}\,\big[\,\Phi_p(\eta_{p-1}^N)(|g_p^N - g_p|^2)\,\big]$. It thus remains to verify that for any index $0 \leq p \leq n$ the quantity 
$\mathbb{E}\,\big[\,\Phi_p(\eta_{p-1}^N)(|g_p^N - g_p|^2)\,\big]$ converges to zero as $N \to \infty$. We use the decomposition
$\Phi_p(\eta_{p-1}^N)(|g_p^N - g_p|^2)
=
\eta_{p}(|g_p^N - g_p|^2)
+
\Phi_p(\eta_{p-1}^N-\eta_{p-1})(|g_p^N - g_p|^2)
$
and treat each term separately. By boundedness of the potntials $\{G_p\}_{p=0}^n$, the quantity $g_p$ and $g_p^N$ are uniformly bounded; it follows from the dominated convergence theorem, Fubini's theorem and Lemma \ref{lem:clt_m} that $\EE[\eta_{p}(|g_p^N - g_p|^2)]$ converges to zero. For dealing with the second term, note that $\Phi_p(\eta_{p-1}^N-\eta_{p-1})(|g_p^N - g_p|^2)$ is less than
\begin{equation}
\int_{\mathsf{X}}\Big|\frac{\eta_{p-1}^N(G_{p-1}H_p(\cdot,x_p))}
{\eta_{p-1}^N(G_{p-1})}-\frac{\eta_{p-1}(G_{p-1}H_p(\cdot,x_p))}{\eta_{p-1}(G_{p-1})}\Big| \times  [g_p^N(x_p) - g_p(x_p) ]^2 \, \lambda(dx_p). \label{eq:end}
\end{equation}
By uniform boundedness of $g_p$ and $g^N_p$ and Fubini's theorem, the conclusion follows once it is established that
\begin{equation}
\int_{\mathsf{X}} \EE\Big|\frac{\eta_{p-1}^N(G_{p-1}H_p(\cdot,x_p))}
{\eta_{p-1}^N(G_{p-1})}-\frac{\eta_{p-1}(G_{p-1}H_p(\cdot,x_p))}{\eta_{p-1}(G_{p-1})}\Big| \, \lambda(dx_p)
\end{equation}
converges to zero. By Assumption \ref{hyp:6} and the boundedness of $G_{p-1}$, for every fixed $x_p \in \mathsf{X}$  Proposition \ref{prop:wlln} applies to the function $G_{p-1}H_p(\cdot,x_p)$ and $G_{p-1}$; it follows that for every fixed $x_p \in \mathsf{X}$  the function
\begin{equation}\label{eq.intermediary.func}
\Big|\frac{\eta_{p-1}^N(G_{p-1}H_p(\cdot,x_p))}
{\eta_{p-1}^N(G_{p-1})}-\frac{\eta_{p-1}(G_{p-1}H_p(\cdot,x_p))}{\eta_{p-1}(G_{p-1})}\Big|\frac{1}{\eta_{p-1}(G_{p-1}H_p(\cdot,x_p))}
\end{equation}
converges to zero in probability. Lemma \ref{lem:ui_term} shows that for $\lambda$-a.e. fixed $x_p\in\mathsf{X}$ the function \eqref{eq.intermediary.func} is also uniformly integrable; consequently, for $\lambda$-a.e. fixed $x_p\in\mathsf{X}$ the function \eqref{eq.intermediary.func} converges in expectation to zero.
In addition, by Lemma \ref{lem:ui_term}
$$
\int_{\mathsf{X}}\mathbb{E}\,\Big|\,\frac{\eta_{p-1}^N(G_{p-1}H_p(\cdot,x_p))}
{\eta_{p-1}^N(G_{p-1})}-\frac{\eta_{p-1}(G_{p-1}H_p(\cdot,x_p))}{\eta_{p-1}(G_{p-1})}\,\Big|\,\lambda(dx_p) \leq
$$
$$
 c \int_{\mathsf{X}}v(x_p)^{2\alpha} \eta_{p-1}(G_{p-1}H_p(\cdot,x_p))\lambda(dx_p).
$$
Application of Fubini  and repeated use of \cite[Lemma 3]{whiteley} allows us to show $\int_{\mathsf{X}}v(x_p)^{2\alpha} \eta_{p-1}(G_{p-1}$ $H_p(\cdot,x_p))\lambda(dx_p)\leq c$, where
 $c<+\infty$ depends on $p$ but not $N$.
Thus, by the dominated convergence theorem, we have shown that the term in (\ref{eq:end})
goes to zero, from which we can conclude the proof.
\end{itemize}
\end{proof}

\begin{rem}
If one wants to adapt the proof for $n$ growing (as in \cite{berard}) the proof as used here must be modified as many of the moment bounds will grow with $n$ (e.g.~Lemma \ref{lem:v_cont}); this is a known problem in SMC, see for instance \cite[Page 20]{beskos1}. This is because we do not control expectations (w.r.t.~the simulated algorithm) of unbounded functions,
uniformly in time. This particular problem is very challenging (for example the work of \cite{douc2,whiteley} do not deal directly with the particle system) and is yet to be handeled in the literature; we do not address this problem. We note also that the proofs of \cite{dds1,douc} also suffer from this deficiency
and assume much stronger hypothesis than in this work.
\end{rem}

\section{Control of the Asymptotic Variance}\label{sec:av_cont}

We now consider the asymptotic variance when $F_n(x_{0:n})=\sum_{p=0}^nf_p(x_p)$, $f_p:\mathsf{X}\rightarrow\mathbb{R}$. Contrary to Theorem \ref{theo:clt} will not assume that the $f_p$ are bounded;
let 
\begin{equation}
\|f\|_{v^{\alpha}}=\sup_{p \geq 0}\|f_p\|_{v^{\alpha}}
\label{eq:f_sup_v}.
\end{equation}

\begin{rem}
In some cases $F_n(x_{0:n})=\sum_{p=0}^nf_p(x_{p-1:p})$ ($x_{-1}$ is null) is of interest. This can be dealt with by either introducing a dirac mass in the Markov kernel $M_n$ and using multistep drift and minorization condtions (see \cite{whiteley} for a discussion),
or with some modifications of the following arguments.
\end{rem}

\begin{theorem}\label{theo:asymp_var_bound}
Assume (A\ref{hyp:1}-\ref{hyp:5}). Then if $\|f\|_{v^{\alpha}}<+\infty$, $\alpha\in(0,1/6)$ 
there exist a $c<+\infty$ which only depends upon the constants in 
(A\ref{hyp:1}), (A\ref{hyp:6}-\ref{hyp:5}) such that for any $n\geq 1$:
$$
\sigma^2(F_n) \leq c\|f\|_{v^{\alpha}}(n+1).
$$
\end{theorem}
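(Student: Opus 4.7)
The plan is to reduce the linear-in-$n$ bound to a per-summand estimate. Writing $\sigma_n^2(F_n) = \sum_{p=0}^n T_p$ with $T_p := \eta_p([h_{p,n}\{P_{p,n}(F_n) - \mathbb{Q}_n(F_n)\}]^2)$, where I have used the identity $\eta_p(D_{p,n}(F_n))/\eta_p(D_{p,n}(1)) = \mathbb{Q}_n(F_n)$ (which follows from the backward interpretation by marginalizing the path measure at time $p$), the task reduces to bounding $T_p$ uniformly in $p$ and $n$ by a constant times $\|f\|_{v^\alpha}^2$; summation over the $n+1$ indices then gives the advertised linear rate.

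To bound $T_p$ independently of $n$ I would exploit the additive structure $F_n = \sum_{q=0}^n f_q$ by writing $P_{p,n}(F_n)(x_p) - \mathbb{Q}_n(F_n) = \sum_{q=0}^n \Delta_{p,q}^n(x_p)$ with $\Delta_{p,q}^n(x_p) := P_{p,n}(f_q)(x_p) - \mathbb{Q}_n(f_q)$, and show that each $|\Delta_{p,q}^n|$ decays geometrically in $|p-q|$. For $q \geq p$, $P_{p,n}(f_q)(x_p)$ reduces to a normalized forward Feynman--Kac functional $Q_{p,q}(f_q \cdot Q_{q,n}(1))(x_p)/Q_{p,n}(1)(x_p)$ whose dependence on $x_p$ is forgotten geometrically via mixing of the semigroup $Q$. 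For $q < p$, $P_{p,n}(f_q)(x_p)$ depends on $x_p$ only through the backward kernel $\mathcal{M}_p$ which, by its product-of-$M_{q,\eta_{q-1}}$ form, inherits its forgetting from the forward semigroup, combined with the local-Doeblin minorization on the sublevel sets $C_d$ supplied by (A\ref{hyp:2}) and (A\ref{hyp:4}), with returns to $C_d$ guaranteed by the multiplicative drift (A\ref{hyp:1}). Whiteley-style arguments of the kind indicated in the paper's appendix should yield $|\Delta_{p,q}^n(x_p)| \leq c\rho^{|p-q|} \|f_q\|_{v^\alpha} W(x_p)$ for some $\rho \in (0,1)$ and $W \in \mathscr{L}_{v^{\alpha'}}$ with $\alpha' > \alpha$, the constants independent of $n, p, q$. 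Summing the geometric series in $q$ then gives $|P_{p,n}(F_n)(x_p) - \mathbb{Q}_n(F_n)| \leq c' \|f\|_{v^\alpha} W(x_p)$ uniformly in $n$, so $T_p \leq c'' \|f\|_{v^\alpha}^2 \cdot \eta_p(h_{p,n}^2 W^2)$.

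The main obstacle is the uniform moment bound $\sup_{p \leq n} \eta_p(h_{p,n}^2 W^2) < \infty$. Since $h_{p,n} = Q_{p,n}(1)/\eta_p(Q_{p,n}(1))$ is the density of the smoothed marginal with respect to $\eta_p$, controlling this second moment against the Lyapunov weight $W^2 \leq c\,v^{2\alpha'}$ requires propagating moments of $v^\beta$ through the Feynman--Kac semigroup uniformly in time; this is where the multiplicative drift (A\ref{hyp:1}), the initial moment (A\ref{hyp:3}), and the bound (A\ref{hyp:5}) on $G$ come in. Because both $W^2$ and the Lyapunov factor needed to absorb $h_{p,n}^2$ each carry powers of $v$, several Hölder splittings with conjugate exponents of order three enter, culminating in an integrability requirement essentially of the form $\eta_p(v^{6\alpha}) \leq c$ uniformly in $p$. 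Since (A\ref{hyp:1}) delivers exponential moments $\eta_p(v^\beta) = \eta_p(e^{\beta V})$ only for $\beta \leq 1$, this is precisely the origin of the restriction $\alpha \in (0, 1/6)$. With uniform bounds on $T_p$ secured, $\sigma_n^2(F_n) \leq c(n+1)\|f\|_{v^\alpha}^2$ follows by summation, matching the stated theorem.
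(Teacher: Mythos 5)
Your strategy matches the paper's: reduce the linear-in-$n$ bound to a uniform per-summand estimate, exploit the additive structure $F_n=\sum_q f_q$, split the sum over $q$ into a forward piece ($q\geq p$) and a backward piece ($q<p$), use multiplicative drift plus local-Doeblin minorization to get geometric forgetting in $|p-q|$, and land on a uniform moment condition of the form $\eta_p(v^{6\alpha})\leq c$, which is precisely why $\alpha\in(0,1/6)$. The paper's tensor-product identity $(\delta_x\otimes\eta_p-\eta_p\otimes\delta_x)\overline{D}_{p,n}(F_n\otimes 1)$ is simply notation for your centering against $\eta_p(D_{p,n}(F_n))/\eta_p(D_{p,n}(1))=\mathbb{Q}_n(F_n)$, so that difference is cosmetic.

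One intermediate step of yours does not, however, go through as written. You claim $|\Delta_{p,q}^n(x_p)|\leq c\rho^{|p-q|}\|f_q\|_{v^\alpha}W(x_p)$ with $W\in\mathscr{L}_{v^{\alpha'}}$ uniformly in $n$, and then separately use $h_{p,n}\in\mathscr{L}_{v^\alpha}$ to control $\eta_p(h_{p,n}^2W^2)$. In fact the Whiteley-style estimates control the \emph{product} $h_{p,n}(x)\Delta_{p,q}^n(x)$, not $\Delta_{p,q}^n(x)$ alone: the centered term carries a $1/Q_{p,n}(1)(x)$ denominator whose only cure is the factor $h_{p,n}(x)=Q_{p,n}(1)(x)/\eta_p(Q_{p,n}(1))$ already sitting inside the variance formula, and the bound the analysis actually delivers is of the form $c\rho^{|p-q|}\|f_q\|_{v^\alpha}\,v(x)^{\alpha}/h_{p,n}(x)$ for the fastest-decaying piece (see the $q=n$ case in the proof of Proposition~\ref{prop:forward}, where the estimate involves $v_{p,n,\alpha}(x)=v(x)^\alpha\|h_{p,n}\|_{v^\alpha}/h_{p,n}(x)$). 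Since $1/h_{p,n}(x)$ is not bounded by any fixed $v^{\alpha'}$ uniformly in $n$, $\Delta_{p,q}^n$ by itself is not $\mathscr{L}_{v^{\alpha'}}$-bounded. The fix is to keep $h_{p,n}$ attached throughout, as the paper's Propositions~\ref{prop:forward} and~\ref{prop:backward} do, yielding $|h_{p,n}(x)\sum_q\Delta_{p,q}^n(x)|\leq c\|f\|_{v^\alpha}v(x)^{3\alpha}$, hence $T_p\leq c\|f\|_{v^\alpha}^2\,\eta_p(v^{6\alpha})$. With that reorganization your argument is essentially the paper's.
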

\begin{proof}
Recall
$$
\sigma^2_n(F_n) = \sum_{p=0}^n \eta_p\bigg(\bigg[h_{p,n}\Big\{P_{p,n}(F_n)-\frac{\eta_p(D_{p,n}(F_n))}{\eta_p(D_{p,n}(1))}\Big\}\bigg]^2\bigg).
$$
Let us consider the term
$$
h_{p,n}(x)\Big\{P_{p,n}(F_n)(x)-\frac{\eta_p(D_{p,n}(F_n))}{\eta_p(D_{p,n}(1))}\Big\}
$$
in the asymptotic variance expression. 
We have the simple calculation:
$$
P_{p,n}(F_n)(x)-\frac{\eta_p(D_{p,n}(F_n))}{\eta_p(D_{p,n}(1))}
= \frac{(\delta_{x}\otimes\eta_p-\eta_p\otimes\delta_{x})(\overline{D}_{p,n}(F_n\otimes 1))}{\eta_p(D_{p,n}(1))D_{p,n}(1)(x)}
$$
where $\overline{D}_{p,n}=D_{p,n}\otimes D_{p,n}$ and the $\overline{\bullet}$ notation is used to denote operators/functions on the product space.
Then, using the additive nature of the functional $F_n$, one derives:
\begin{eqnarray*}
(\delta_{x_p}\otimes\eta_p-\eta_p\otimes\delta_{x_p})\overline{D}_{p,n}(F_n\otimes 1) & = & \sum_{q=0}^{p-1}(\delta_{x_p}\otimes\eta_p-\eta_p\otimes\delta_{x_p})(\overline{Q}_{p,n}(1) 
\overline{M}_{p:q}(f_q\otimes 1)) \\& &+ \sum_{q=p}^{n}(\delta_{x_p}\otimes\eta_p-\eta_p\otimes\delta_{x_p})(\overline{Q}_{p,q}( (f_q\otimes 1) \overline{Q}_{q,n}(1)))
\end{eqnarray*}
where $M_{p:q} = M_{p,\eta_{p-1}}\dots M_{q+1,\eta_{q}}$. 

We consider first for $p\geq 1$:
$$
\frac{h_{p,n}(x)}{\eta_p(D_{p,n}(1))D_{p,n}(1)(x)} \sum_{q=0}^{p-1}(\delta_{x}\otimes\eta_p-\eta_p\otimes\delta_{x})(\overline{Q}_{p,n}(1) 
\overline{M}_{p:q}(f_q\otimes 1)) = 
$$
$$
\frac{h_{p,n}(x)}{\eta_p(Q_{p,n}(1))} \sum_{q=0}^{p-1}\eta_p(Q_{p,n}(1)[M_{p:q}(f_q)(x) - M_{p:q}(f_q)]).
$$
By Proposition \ref{prop:backward} the R.H.S.~is upper-bounded by
$
c \|f\|_{v^{\alpha}} v(x)^{2\alpha}.
$
Then we consider (which covers the case $p=0$)
$$
\frac{h_{p,n}(x)}{\eta_p(D_{p,n}(1))D_{p,n}(1)(x)}
\sum_{q=p}^{n}(\delta_{x_p}\otimes\eta_p-\eta_p\otimes\delta_{x_p})(\overline{Q}_{p,q}( (f_q\otimes 1) \overline{Q}_{q,n}(1))) = 
$$
$$
\frac{h_{p,n}(x)}{\eta_p(Q_{p,n}(1))Q_{p,n}(1)(x)} \sum_{q=p}^{n}(\delta_{x}\otimes\eta_p-\eta_p\otimes\delta_{x})(\overline{Q}_{p,q}( (f_q\otimes 1) \overline{Q}_{q,n}(1))).
$$
By Proposition \ref{prop:forward}, the R.H.S.~is upper-bounded by
$
c\|f\|_{v^{\alpha}} v(x)^{3\alpha}.
$
Thus, we have proved that
$$
\sigma^2_n(F_n) \leq c\|f\|_{v^{\alpha}} \sum_{p=0}^n \eta_p(v^{6\alpha}).
$$
We conclude by noting $\alpha\in(0,1/6)$ and using \cite[Proposition 1]{whiteley}.
\end{proof}

\section{An Example}\label{sec:verify}

An example where our assumptions can hold, is that of \cite[Section 3.2]{whiteley}, with some minor modifications. We recount the details here.
$\mathsf{X}=\mathbb{R}^{d_x}$ with $n\geq 0$
$$
X_{n+1} = X_{n} + W_n\quad W_n\stackrel{i.i.d.}{\sim} \mathcal{N}_{d_x}(0,I_{d_x})
$$
$I_{d_x}$ the $d_x\times d_x$ identity matrix.
One can take $V(x) = 1 + \frac{x^T x}{2(1+\delta_0)}$, $\delta_0>1$. The observation model is taken as
$$
Y_n |X_n=x \sim \mathcal{N}_{d_y}(H(x),\sigma^2 I_{d_y})
$$
where $H:\mathsf{X}\rightarrow\mathbb{R}^{d_y}$; that is $G_n(x)$ is the $d_y$-dimensional Gaussian density with mean $H(x)$ covariance $I_{d_y}$
and is evaluated point-wise at the observed $y_n$.
It is assumed that the actual observations lie on a space $\mathsf{Y}_{\star}\subset\mathbb{R}^{d_y}$, with $\mathsf{Y}_{\star}$ compact.
If $H$ is bounded such that
$$
\lim_{r\rightarrow\infty}\sup_{|x|\geq r} \frac{x^T x}{2} \frac{1+\delta_1}{\delta_0(1+\delta_0)} + \frac{1}{\sigma^2_y} \sup_{y\in\mathsf{Y}_{\star}}|y|\sup_{|\lambda|=1}\lambda^T H(x) - \frac{H(x)^TH(x)}{2\sigma^2_y}<0
$$
with $\delta_1\in(0,1)$ then one can verify all of the assumptions, including $1/G_{n-1}\in\mathscr{L}_{v^{\delta/2}}$ using the work in \cite{whiteley}, apart from (A\ref{hyp:6}).
This latter assumption will hold, if one can show that for each $\alpha\in(0,1/2)$
\begin{equation}
\inf_{y\in\mathsf{X}}\Big(\big(\inf_{x\in C_d}H_n(x,y)\big)v(y)^{\alpha}\Big)>0  \label{eq:ver_eq}.
\end{equation}
This is because $\eta_{n-1}(C_d)$ can be shown to be lower-bounded uniformly in $n$ (see the proof of \cite[Lemma 8]{whiteley}) and $G_{n-1}$ is (uniform in $n$) upper and lower-bounded if $\mathsf{Y}_{\star}$ is compact (which it is).
Simple calculations show that \eqref{eq:ver_eq} can hold if $\sigma^2_y>4$ and then taking $1<\delta_0$ small enough.

Another observation model (with the above hidden Markov chain and $v(x)$) for which one can verify the assumptions of this article can be found in \cite[Section 3.1.1.]{whiteley}.
Here one sets $\mathsf{Y}_{\star}=\mathsf{Y}=\{0,1\}^{d_x}$ and writing $\mathcal{B}(p)$ as the Bernoulli distribution with success probability $p$, the observation model is
$$
Y_n |X_n=x \sim \mathcal{B}(p(x^1))\otimes\cdots\otimes\mathcal{B}(p(x^{d_x}))
$$
where $p(x) = 1/(1+e^{-x})$. It is easily shown that $1/G_{n-1}\in\mathscr{L}_{v^{\delta/2}}$ and all the other assumptions apart from (A\ref{hyp:6}) easily follow. The latter assumption will follow by the above calculations
and the fact that (treating $G_n$ as a function of the observations also) $G_n(x;y)\leq 1$ and $\inf_{(y,x)\in\mathsf{Y}\times C_d}G_n(x;y)>0$.

\subsubsection*{Acknowledgements}
The author was supported by Singapore MOE grant R-155-000-119-133.

\appendix

\section{Technical Results for Central Limit Theorem}

Throughout this Section we suppose that for any $n\geq 0$, $\|G_n\|_{\infty}<+\infty$ and this is ommited from all statements below. We also use $\mathbb{E}[\cdot]$ to denote expectation w.r.t.~the particle system.
$\mathscr{F}_{n}^N$ is the natural filtration of the particles at time $n$. 

\begin{lem}\label{lem:clt_m}
Assume (A\ref{hyp:1}-\ref{hyp:6}).
Suppose that for each $n\geq 0$, $1/G_{n}\in\mathscr{L}_{v^{\delta}}$, with $\delta$ as in (A\ref{hyp:1}).
 Let $p>0$, then for $\lambda-$a.e.~$x_p\in\mathsf{X}$ and any $F\in\mathbb{B}_b(\mathsf{X}^{n+1})$
$$
[D_{p,n}^N-D_{p,n}](F)(x_p) \rightarrow_{\mathbb{P}} 0.
$$
\end{lem}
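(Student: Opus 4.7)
\bigskip\noindent
\textbf{Proof sketch.} My plan is as follows. Since $\mathcal{Q}_{p,n}$ is independent of the particle system, I would integrate it out once and for all: setting $\tilde F(x_{0:p}) := \int \mathcal{Q}_{p,n}(x_p, dx_{p+1:n}) F(x_{0:n})$, boundedness of $F$ and of the $G_q$ gives $\tilde F\in\mathbb{B}_b(\mathsf{X}^{p+1})$, and the claim reduces to $\mathcal{M}_p^N(x_p, \tilde F(\cdot, x_p)) \rightarrow_{\mathbb{P}} \mathcal{M}_p(x_p, \tilde F(\cdot, x_p))$ for $\lambda$-a.e.~$x_p$. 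Unfolding the backward recursion, set $\Psi_0 := \tilde F$ and, for $1\le k\le p$,
\[
\Psi_k(x_k,\ldots,x_p) := M_{k,\eta_{k-1}}(x_k, \Psi_{k-1}(\cdot, x_k,\ldots, x_p)),
\]
with $\Psi_k^N$ obtained by replacing each $\eta_{k-1}$ by $\eta_{k-1}^N$. Since every $M_{k,\eta}$ is a Markov kernel, the uniform bound $\|\Psi_k\|_\infty\vee\|\Psi_k^N\|_\infty\leq\|\tilde F\|_\infty$ propagates automatically, and the target becomes $\Psi_p^N(x_p)\rightarrow_{\mathbb{P}}\Psi_p(x_p)$.

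I would then induct on $p$, strengthening the target to $\mathbb{E}\,|\Psi_p^N(x_p)-\Psi_p(x_p)|^2\rightarrow 0$ for $\lambda$-a.e.~$x_p$; the $L^2$ upgrade over convergence in probability is automatic via the uniform bound and bounded convergence. The base case $p=1$ is an application of Proposition \ref{prop:wlln} to the numerator and denominator of $\Psi_1^N(x_1) = \eta_0^N(G_0 H_1(\cdot,x_1) \tilde F(\cdot,x_1))/\eta_0^N(G_0 H_1(\cdot,x_1))$: (A\ref{hyp:6}) together with (A\ref{hyp:3}) place both weighted functions in a class $\mathscr{L}_{v^\alpha}$ satisfying the moment hypothesis of Proposition \ref{prop:wlln}, while (A\ref{hyp:2}) guarantees the limiting denominator is strictly positive for $\lambda$-a.e.~$x_1$; Slutsky closes the case. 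For the inductive step I would decompose
\[
\Psi_p^N-\Psi_p = [M_{p,\eta_{p-1}^N}-M_{p,\eta_{p-1}}](x_p, \Psi_{p-1})+ M_{p,\eta_{p-1}^N}(x_p, \Psi_{p-1}^N-\Psi_{p-1}).
\]
The first summand, involving only the deterministic bounded $\Psi_{p-1}$, falls under the base-case argument. For the second, Jensen's inequality applied to the Markov kernel $M_{p,\eta_{p-1}^N}$ gives $|M_{p,\eta_{p-1}^N}(x_p, \Psi_{p-1}^N-\Psi_{p-1})|^2 \leq M_{p,\eta_{p-1}^N}(x_p, Z^N)$ with $Z^N(x_{p-1}):=|\Psi_{p-1}^N(x_{p-1}, x_p)-\Psi_{p-1}(x_{p-1}, x_p)|^2$. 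After bounding the denominator $\eta_{p-1}^N(G_{p-1} H_p(\cdot,x_p))$ below on an event of probability tending to one (as in the base case), the remaining task is $\mathbb{E}\,[\eta_{p-1}^N(G_{p-1} H_p Z^N)]\rightarrow 0$. Conditioning on $\mathscr{F}_{p-2}^N$ renders $Z^N$ deterministic and the particles $\{x_{p-1}^i\}$ conditionally i.i.d.~from $\Phi_{p-1}(\eta_{p-2}^N)$; a Cauchy--Schwarz split in that measure followed by Fubini then reduces the problem to a $\lambda$-integral of $\mathbb{E}[Z^N(x_{p-1})^2]$ against a random density of bounded total expectation, which vanishes by the inductive hypothesis and dominated convergence.

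The hard part will be propagating $L^2$ control through the two concurrent sources of randomness in the second summand --- the random Markov kernel $M_{p,\eta_{p-1}^N}$ and the random integrand $\Psi_{p-1}^N$ --- while keeping the possibly unbounded weight $H_p$ under control. The required uniform moment bounds on quantities like $\Phi_{p-1}(\eta_{p-2}^N)(H_p^2)$ would come from (A\ref{hyp:6}) combined with iterated application of \cite[Lemma 3]{whiteley}; care is also needed to verify that the $\lambda$-null exceptional sets of $x_{p-1}$ on which the inductive hypothesis might fail do not obstruct the Fubini manipulation.
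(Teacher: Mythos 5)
Your argument is correct in outline but takes a genuinely different route from the paper. The paper invokes the telescoping identity of \cite[Lemma~6.1]{dds1},
\[
[D_{p,n}^N-D_{p,n}](F)(x_p)=\sum_{q=0}^p[\mathcal{M}_{p,q,\eta_q^N}-\mathcal{M}_{p,q,\Phi_q(\eta_{q-1}^N)}](S_{p,q,n}^N(F))(x_p),
\]
which interpolates at each level $q$ between $\eta_q^N$ and its one-step conditional mean $\Phi_q(\eta_{q-1}^N)$. Each summand is then a conditional martingale increment, and the conditional Marcinkiewicz--Zygmund inequality immediately yields a quantitative $O(N^{-1/2})$ rate, with the growth of the integrands managed via (A\ref{hyp:6}) and repeated use of \cite[Lemma~3]{whiteley} and Lemma~\ref{lem:v_cont}. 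You instead run a backward induction on the recursion depth, interpolating at each level between $\eta_{k-1}^N$ and the deterministic $\eta_{k-1}$, and you close the induction with a two-term split (a ``new kernel, old integrand'' term controllable by Proposition~\ref{prop:wlln}, and a ``new integrand'' term estimated via Jensen plus conditioning on $\mathscr{F}_{p-2}^N$). The paper's decomposition buys a sharper and more uniform estimate in one pass, because the $\eta_q^N-\Phi_q(\eta_{q-1}^N)$ increments are centred; yours is more elementary and self-contained but asks you to manage two interacting sources of randomness at every inductive step.

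A point on which to be careful in the inductive step: conditioning on $\mathscr{F}_{p-2}^N$ gives $\mathbb{E}[\eta_{p-1}^N(G_{p-1}H_p(\cdot,x_p)Z^N)\mid\mathscr{F}_{p-2}^N]=\Phi_{p-1}(\eta_{p-2}^N)(G_{p-1}H_p(\cdot,x_p)Z^N)$, and here both the measure and $Z^N$ remain random; the Cauchy--Schwarz split you propose does not by itself let you take a limit, because the bound involves $\Phi_{p-1}(\eta_{p-2}^N)(H_p^2)$ which is only $O(1)$, not $o(1)$. What you actually need is the further split $\Phi_{p-1}(\eta_{p-2}^N)(\cdot)=\eta_{p-1}(\cdot)+[\Phi_{p-1}(\eta_{p-2}^N)-\eta_{p-1}](\cdot)$ and a separate treatment of the two pieces --- which is precisely the manoeuvre the paper performs in the body of Theorem~\ref{theo:clt}. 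Your indicated dominated-convergence step across the outer $\lambda(dx_p)$-integral then needs a dominating envelope uniform in $N$; this is where $\int v(x_p)^{2\alpha}\eta_{p-1}(G_{p-1}H_p(\cdot,x_p))\lambda(dx_p)<\infty$ and Lemma~\ref{lem:ui_term} (uniform integrability) are doing real work in the paper, and you would need analogues of them at each inductive level. So the approach closes, but not as easily as the sketch suggests; the omitted hard part is essentially a recursive version of the second bullet of the Theorem~\ref{theo:clt} proof.
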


\begin{proof}
By \cite[Lemma 6.1]{dds1}, we have
\begin{equation}
[D_{p,n}^N-D_{p,n}](F)(x_p) = \sum_{q=0}^p [\mathcal{M}_{p,q,\eta_q^N}-\mathcal{M}_{p,q,\Phi_q(\eta_{q-1}^N)}](S_{p,q,n}^N(F))(x_p)\label{eq:tech_lem1}
\end{equation}
where for $\mu\in\mathcal{P}$, $0\leq q <p$
$$
\mathcal{M}_{p,q,\mu}(x_p,dx_{q:p-1}) = \frac{\mu(dx_q)\mathcal{Q}_{q,p-1}(x_q,dx_{q+1:p-1}) G_{p-1}(x_{p-1})H_p(x_{p-1},x_p)}{\mu Q_{q,p-1}(G_{p-1}H_p(\cdot,x_p))}
$$
$\mathcal{Q}_{q,p-1}$ is defined in \eqref{eq:q_path_def} and
$$
S_{p,q,n}^N(F)(x_{q:p}) = \int_{\mathsf{X}^{q+n-p}} \mathcal{Q}_{p,n}(x_p,dx_{p+1:n}) \mathcal{M}_q^N(x_q,dx_{0:q-1})F(x_{0:n})
$$
see \eqref{eq:m_back_path} for a defintion of $\mathcal{M}_q^N$.
We note that 
\begin{equation}
\sup_{x_{q:p}\in\mathsf{X}^{p-q+1}}|S_{p,q,n}^N(F)(x_{q:p})|\leq c\|F\|_{\infty}
\label{eq:tech_lem2}
\end{equation} 
where $c$ is a finite constant that may depend on $p,n$ but not $N$.
We will show that each summand on the R.H.S.~of \eqref{eq:tech_lem1} will converge to zero in probability.

It is first remarked that by (A\ref{hyp:1}), (A\ref{hyp:6}) and Proposition \ref{prop:wlln} 
$$
\eta_{q}^NQ_{q,p-1}\Big(\frac{G_{p-1}H_p(\cdot,x_p)}{\eta_{p-1}(G_{p-1}H_p(\cdot,x_p))}\Big) \rightarrow_{\mathbb{P}}
\eta_{q}Q_{q,p-1}\Big(\frac{G_{p-1}H_p(\cdot,x_p)}{\eta_{p-1}(G_{p-1}H_p(\cdot,x_p))}\Big)
$$
and
$$
\Phi_q(\eta_{q-1}^N)Q_{q,p-1}\Big(\frac{G_{p-1}H_p(\cdot,x_p)}{\eta_{p-1}(G_{p-1}H_p(\cdot,x_p))}\Big) \rightarrow_{\mathbb{P}}
\eta_{q}Q_{q,p-1}\Big(\frac{G_{p-1}H_p(\cdot,x_p)}{\eta_{p-1}(G_{p-1}H_p(\cdot,x_p))}\Big)
$$
so it is enough to show that 
$$
\eta_{q}Q_{q,p-1}\Big(\frac{G_{p-1}H_p(\cdot,x_p)}{\eta_{p-1}(G_{p-1}H_p(\cdot,x_p))}\Big)^{-1}
\Big(
[\eta_q^N-\Phi_q(\eta_{q-1}^N)]\bigg[\mathcal{Q}_{q,p-1}\Big(\frac{G_{p-1}H_p(\cdot,x_p)}{\eta_{p-1}(G_{p-1}H_p(\cdot,x_p))}S_{p,q,n}^N(F)\Big)\bigg]
\Big)
$$
converges in probability to zero. We have via Jensen and the (condtional) Marcinkiewicz-Zygmund inequalities that
$$
\mathbb{E}\Bigg[\Bigg|
[\eta_q^N-\Phi_q(\eta_{q-1}^N)]\bigg[\mathcal{Q}_{q,p-1}\Big(\frac{G_{p-1}H_p(\cdot,x_p)}{\eta_{p-1}(G_{p-1}H_p(\cdot,x_p))}S_{p,q,n}^N(F)\Big)\bigg]
\Bigg|\Bigg] \leq 
$$
$$
\frac{c}{\sqrt{N}}\mathbb{E}\Bigg[\Bigg|\mathcal{Q}_{q,p-1}\Big(\frac{G_{p-1}H_p(\cdot,x_p)}{\eta_{p-1}(G_{p-1}H_p(\cdot,x_p))}S_{p,q,n}^N(F)\Big)(x_q^1)\Bigg|^2\Bigg]^{1/2}.
$$
By \eqref{eq:tech_lem2}
$$
\mathbb{E}\Bigg[\Bigg|\mathcal{Q}_{q,p-1}\Big(\frac{G_{p-1}H_p(\cdot,x_p)}{\eta_{p-1}(G_{p-1}H_p(\cdot,x_p))}S_{p,q,n}^N(F)\Big)(X_q^1)\Bigg|^2\Bigg]^{1/2} \leq 
c\|F\|_{\infty}\mathbb{E}\bigg[Q_{q,p-1}\Big(\frac{G_{p-1}H_p(\cdot,x_p)}{\eta_{p-1}(G_{p-1}H_p(\cdot,x_p))}\Big)(X_q^1)^2\bigg]^{1/2}.
$$
Then by (A\ref{hyp:6}) and repeated application of \cite[Lemma 3]{whiteley}, we have
$$
\mathbb{E}\bigg[Q_{q,p-1}\Big(\frac{G_{p-1}H_p(\cdot,x_p)}{\eta_{p-1}(G_{p-1}H_p(\cdot,x_p))}\Big)(X_q^1)^2\bigg]^{1/2} \leq  c v(x_p)^{\alpha}\mathbb{E}[v(X_q^1)^{2\alpha}]^{1/2}
$$
then, for $\mathbb{E}[v(X_q^1)^{2\alpha}]$, Jensen and application of Lemma \ref{lem:v_cont}, yields that
$$
\mathbb{E}\Bigg[\Bigg|
[\eta_q^N-\Phi_q(\eta_{q-1}^N)]\bigg[\mathcal{Q}_{q,p-1}\Big(\frac{G_{p-1}H_p(\cdot,x_p)}{\eta_{p-1}(G_{p-1}H_p(\cdot,x_p))}S_{p,q,n}^N(F)\Big)\bigg]
\Bigg|\Bigg] \leq \frac{c}{\sqrt{N}}  v(x_p)^{\alpha}.
$$
Thus we have shown that
$$
\eta_{q}Q_{q,p-1}\Big(\frac{G_{p-1}H_p(\cdot,x_p)}{\eta_{p-1}(G_{p-1}H_p(\cdot,x_p))}\Big)^{-1}
\Big(
[\eta_q^N-\Phi_q(\eta_{q-1}^N)]\bigg[\mathcal{Q}_{q,p-1}\Big(\frac{G_{p-1}H_p(\cdot,x_p)}{\eta_{p-1}(G_{p-1}H_p(\cdot,x_p))}S_{p,q,n}^N(F)\Big)\bigg]
\Big)
$$
converges in probability to zero, from which we can conclude.
\end{proof}

\begin{lem}\label{lem:ui_term}
Assume (A\ref{hyp:1}-\ref{hyp:6}). Suppose that for each $n\geq 0$, $1/G_{n}\in\mathscr{L}_{v^{\delta/2}}$, with $\delta$ as in (A\ref{hyp:1}), then 
there exist a $1\geq \upsilon>0$ such that
for any $n\geq 1$ there exist a $c<+\infty$ such that for $\lambda-$a.e.~$x_n\in\mathsf{X}$
$$
\mathbb{E}\Bigg[\Bigg|\Bigg[\frac{\eta_{n-1}^N(G_{n-1}H_n(\cdot,x_n))}
{\eta_{n-1}^N(G_{n-1})}-\frac{\eta_{n-1}(G_{n-1}H_p(\cdot,x_n))}{\eta_{n-1}(G_{n-1})}\Bigg]\frac{1}{\eta_{n-1}(G_{n-1}H_n(\cdot,x_n))}\Bigg|^{1+\upsilon}\Bigg]
\leq c v(x_n)^{(1+\upsilon)\alpha}
$$ 
where $\alpha$ is as in (A\ref{hyp:6}).
\end{lem}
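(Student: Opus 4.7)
The plan is to clear denominators. Setting $a^N:=\eta_{n-1}^N(G_{n-1}H_n(\cdot,x_n))$, $a:=\eta_{n-1}(G_{n-1}H_n(\cdot,x_n))$, $b^N:=\eta_{n-1}^N(G_{n-1})$, $b:=\eta_{n-1}(G_{n-1})$, and introducing the normalised test functions $\phi_{x_n}:=G_{n-1}H_n(\cdot,x_n)/a$ and $\psi:=G_{n-1}/b$ (both of which integrate to $1$ under $\eta_{n-1}$), a direct algebraic computation gives the identity
\begin{equation*}
\frac{1}{a}\!\left(\frac{a^N}{b^N}-\frac{a}{b}\right)=\frac{1}{b^N}\bigl[(\eta_{n-1}^N-\eta_{n-1})(\phi_{x_n})-(\eta_{n-1}^N-\eta_{n-1})(\psi)\bigr].
\end{equation*}
By (A\ref{hyp:6}), $\|\phi_{x_n}\|_{v^\alpha}\leq c\,v(x_n)^\alpha$, while by boundedness of $G_{n-1}$ and strict positivity of $b$ (both deducible from the standing hypotheses), $\|\psi\|_\infty\leq c$, with $c$ allowed to depend on $n$.

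I would then split the expectation across the event $E_N:=\{b^N\geq b/2\}$ and its complement. On $E_N$, $1/b^N$ is deterministically bounded by $2/b$, and the subadditivity $|x+y|^{1+\upsilon}\leq 2^{\upsilon}(|x|^{1+\upsilon}+|y|^{1+\upsilon})$ reduces the task to controlling $\mathbb{E}|(\eta_{n-1}^N-\eta_{n-1})(\phi_{x_n})|^{1+\upsilon}$ and $\mathbb{E}|(\eta_{n-1}^N-\eta_{n-1})(\psi)|^{1+\upsilon}$. Both are handled by the conditional Marcinkiewicz--Zygmund argument already deployed in the proof of Lemma \ref{lem:clt_m}, combined with (A\ref{hyp:6}), Lemma \ref{lem:v_cont}, and repeated applications of \cite[Lemma 3]{whiteley} to bound the relevant moments of $v(X_{n-1}^1)$. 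The resulting estimate is of the stronger order $v(x_n)^{(1+\upsilon)\alpha}N^{-(1+\upsilon)/2}$, well within the required bound.

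On the complementary event $E_N^c$, Markov's inequality applied to $|b^N-b|^q$ for arbitrary $q\geq 1$ (combined with the standard SMC $L^p$ bound for the bounded function $G_{n-1}$) gives $\mathbb{P}(E_N^c)\leq c_qN^{-q/2}$. For the integrand I would use the crude bound $|(\,\cdot\,)|\leq a^N/(a\,b^N)+1/b$. Since $a^N/b^N=\sum_iW_iH_n(X_{n-1}^i,x_n)$ is a weighted average with $W_i\propto G_{n-1}(X_{n-1}^i)$, it is dominated by $\max_i H_n(X_{n-1}^i,x_n)$. Combining (A\ref{hyp:6}) with $1/G_{n-1}\in\mathscr{L}_{v^{\delta/2}}$ yields the pointwise estimate $H_n(x,y)/a\leq c\,v(x)^{\alpha+\delta/2}v(y)^\alpha$, so that $a^N/(ab^N)\leq c\,v(x_n)^\alpha\max_iv(X_{n-1}^i)^{\alpha+\delta/2}$. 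A H\"older inequality with dual exponents $(s,s')$, the union bound $\max_i\leq\sum_i$, and Lemma \ref{lem:v_cont} then give
\begin{equation*}
\mathbb{E}\!\left[\,|\,\cdot\,|^{1+\upsilon}\mathbb{I}_{E_N^c}\right]\leq c\,v(x_n)^{(1+\upsilon)\alpha}N^{1/s}\,N^{-q/(2s')},
\end{equation*}
which is made uniformly bounded (in fact vanishing) in $N$ by taking $q$ sufficiently large compared to $2s'/s$.

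The main obstacle is the simultaneous calibration of $\upsilon,s,s'$ and $q$: one needs to pick $\upsilon\in(0,1]$ small enough (and $s$ sufficiently close to $1$) that the exponent $s(1+\upsilon)(\alpha+\delta/2)$ of $v(X_{n-1}^1)$ lies within the range for which Lemma \ref{lem:v_cont} furnishes a uniform-in-$N$ bound on $\eta_{n-1}^N(v^{\beta})$, whilst maintaining $q/(2s')\geq 1/s$. A more elegant route would bypass the event splitting entirely by directly controlling $\mathbb{E}[(b^N)^{-r}]$, but this appears delicate since $G_{n-1}$ is not assumed bounded below away from zero under our hypotheses, so the event-decomposition approach via crude pointwise bounds on the bad event seems the most robust.
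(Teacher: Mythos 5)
Your proposal is essentially sound and would yield the stated estimate, but it follows a genuinely different and considerably more involved path than the paper, and it misses the punchline of why the hypothesis $1/G_{n-1}\in\mathscr{L}_{v^{\delta/2}}$ is there. The paper does \emph{exactly} the ``more elegant route'' you dismiss at the end as delicate. Setting $b^N=\eta_{n-1}^N(G_{n-1})$, the pointwise bound $G_{n-1}\geq \|1/G_{n-1}\|_{v^{\delta/2}}^{-1}\,v^{-\delta/2}$ gives
$$
\eta_{n-1}^N(G_{n-1})\;\geq\;\|1/G_{n-1}\|_{v^{\delta/2}}^{-1}\,\eta_{n-1}^N\big(v^{-\delta/2}\big)\;\geq\;\|1/G_{n-1}\|_{v^{\delta/2}}^{-1}\,\frac{1}{\eta_{n-1}^N(v^{\delta/2})},
$$
the last step by Cauchy--Schwarz on the empirical measure. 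This is a \emph{deterministic} inequality $1/b^N\leq c\,\eta_{n-1}^N(v^{\delta/2})$ (equation \eqref{eq:ui_eq_needed} in the paper), so $G_{n-1}$ being unbounded below is a red herring: the drift hypothesis directly turns the denominator into a moment of $v^{\delta/2}$ under the particle approximation, for which Lemma \ref{lem:v_cont} supplies uniform-in-$N$ control. The paper then splits off the deterministic $1/\eta_{n-1}(G_{n-1})^{1+\upsilon}$ term, uses (A\ref{hyp:6}) to reduce to $\mathbb{E}\big[\big(\eta_{n-1}^N(v^{1/2})\eta_{n-1}^N(v^{\delta/2})\big)^{1+\upsilon}\big]$ (taking $\alpha=1/2$ without loss), and finishes with Minkowski plus Jensen/Cauchy--Schwarz under the calibration $0<\upsilon<(1-\delta)/(1+\delta)$. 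No event decomposition, no Markov--H\"older split, no $\max_i\leq\sum_i$ union bound costing a power of $N$ to be killed by high moments of $|b^N-b|$.

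Two further, minor remarks on your version. First, your claim that $\mathbb{E}\big|(\eta_{n-1}^N-\eta_{n-1})(\phi_{x_n})\big|^{1+\upsilon}$ is handled by the conditional Marcinkiewicz--Zygmund argument ``already deployed in the proof of Lemma \ref{lem:clt_m}'' is a little loose: that argument controls $[\eta_q^N-\Phi_q(\eta_{q-1}^N)]$, and $\eta_{n-1}^N-\eta_{n-1}$ carries an additional bias component; however, since the statement only asks for a uniform-in-$N$ bound (no rate), a crude triangle-inequality estimate via $\mathbb{E}\big[\eta_{n-1}^N(v^{\alpha(1+\upsilon)})\big]\leq c$ (Jensen plus Lemma \ref{lem:v_cont}) is enough and cleaner. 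Second, your bad-event estimate leans on a standard $L^q$ inequality $\mathbb{E}|b^N-b|^q\leq c_qN^{-q/2}$ for the bounded function $G_{n-1}$; this is available here (because $\eta_q(G_q)>0$ for each $q<n$ and $\|G_q\|_\infty<\infty$) but is not something the paper establishes, and invoking it makes the proposal less self-contained than the paper's route, which needs only Lemma \ref{lem:v_cont}. Net: the algebraic identity you derive and the calibration of exponents are correct, but the path is longer and imports extra machinery that the stated hypothesis lets you avoid.
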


\begin{proof}
Throughout $c$ is a constant whose value can change from line to line, but only depends upon $n$.
We have 
$$
\mathbb{E}\Bigg[\Bigg|\Bigg[\frac{\eta_{n-1}^N(G_{n-1}H_n(\cdot,x_n))}
{\eta_{n-1}^N(G_{n-1})}-\frac{\eta_{n-1}(G_{n-1}H_p(\cdot,x_n))}{\eta_{n-1}(G_{n-1})}\Bigg]\frac{1}{\eta_{n-1}(G_{n-1}H_n(\cdot,x_n))}\Bigg|^{1+\upsilon}\Bigg]
\leq
$$
$$
c\Bigg(\frac{1}{\eta_{n-1}(G_{n-1})^{1+\upsilon}}
+ \mathbb{E}\Bigg[\Bigg|\frac{\eta_{n-1}^N(G_{n-1}H_n(\cdot,x_n))}
{\eta_{n-1}^N(G_{n-1})\eta_{n-1}(G_{n-1}H_n(\cdot,x_n))}\Bigg|^{1+\upsilon}\Bigg]
\Bigg).
$$
Then, application of (A\ref{hyp:6}) gives that
\begin{equation}
\mathbb{E}\Bigg[\Bigg|\frac{\eta_{n-1}^N(G_{n-1}H_n(\cdot,x_n))}
{\eta_{n-1}^N(G_{n-1})\eta_{n-1}(G_{n-1}H_n(\cdot,x_n))}\Bigg|^{1+\upsilon}\Bigg] \leq cv(x_n)^{(1+\upsilon)\alpha}
\mathbb{E}\Bigg[\Bigg|\frac{\eta_{n-1}^N(v^{\alpha})}
{\eta_{n-1}^N(G_{n-1})}\Bigg|^{1+\upsilon}\Bigg]\label{eq:lem_ui1}.
\end{equation}

We will show now that (see the R.H.S.~of \eqref{eq:lem_ui1})
$$
\mathbb{E}\Bigg[\Bigg|\frac{\eta_{n-1}^N(v^{\alpha})}
{\eta_{n-1}^N(G_{n-1})}\Bigg|^{1+\upsilon}\Bigg] \leq c
$$
for some $1\geq\upsilon>0$
when $\alpha=1/2$ (recall $\alpha\in(0,1/2)$). From the proof of Lemma \ref{lem:v_cont}, equation \eqref{eq:ui_eq_needed} one can show in a similar manner that
$$
\mathbb{E}\Bigg[\Bigg|\frac{\eta_{n-1}^N(v^{\frac{1}{2}})}
{\eta_{n-1}^N(G_{n-1})}\Bigg|^{1+\upsilon}\Bigg]  \leq c\mathbb{E}\big[\big\{\eta_{n-1}^N(v^{\frac{1}{2}})\eta_{n-1}^N(v^{\delta/2})\big\}^{1+\upsilon}\big].
$$
Then we have by Minkowski
$$
\mathbb{E}\big[\big\{\eta_{n-1}^N(v^{\frac{1}{2}})\eta_{n-1}^N(v^{\delta/2})\big\}^{1+\upsilon}\big] \leq
$$
$$
\frac{1}{N^{2(1+\upsilon)}}\Big(\mathbb{E}\big[\big\{\sum_i v(X_{n-1}^i)^{\frac{1+\delta}{2}}\big\}^{1+\upsilon}\big]^{\frac{1}{1+\upsilon}} + 
\mathbb{E}\big[\big\{\sum_{i\neq j} v(X_{n-1}^i)^{\frac{1}{2}}v(X_{n-1}^j)^{\frac{\delta}{2}}\big\}^{1+\upsilon}\big]^{\frac{1}{1+\upsilon}}
\Big)^{1+\upsilon} \leq
$$
$$
\frac{1}{N^{2(1+\upsilon)}}\Big(N \mathbb{E}\big[v(X_{n-1}^1)^{\frac{(1+\delta)(1+\upsilon)}{2}}\big]^{\frac{1}{1+\upsilon}} +
\frac{N(N-1)}{2}\mathbb{E}\big[v(X_{n-1}^1)^{\frac{1}{2}}v(X_{n-1}^2)^{\frac{\delta(1+\upsilon)}{2}}\big]^{\frac{1}{1+\upsilon}}
\Big)^{1+\upsilon}.
$$
Let $0<\upsilon<(1-\delta)/(1+\delta)$, we will show that two expectations in the line above are upper-bounded by a constant. For 
$$
\mathbb{E}\big[v(X_{n-1}^1)^{\frac{(1+\delta)(1+\upsilon)}{2}}\big]^{\frac{1}{1+\upsilon}}
$$
one can apply Jensen followed by Lemma \ref{lem:v_cont}. For
$$
\mathbb{E}\big[v(X_{n-1}^1)^{\frac{1}{2}}v(X_{n-1}^2)^{\frac{\delta(1+\upsilon)}{2}}\big]^{\frac{1}{1+\upsilon}}
$$
we can apply Cauchy-Schwarz to obtain the upper-bound
$$
\mathbb{E}[v(X_{n-1}^1)]^{\frac{1}{2(1+\upsilon)}}\mathbb{E}[v(X_{n-1}^2)^{\delta(1+\upsilon)}]^{\frac{1}{2(1+\upsilon)}}
$$
the left hand expectation is controlled via  Lemma \ref{lem:v_cont} and the right-hand via Jensen followed by Lemma \ref{lem:v_cont}. Hence one can deduce that
$$
\mathbb{E}\Bigg[\Bigg|\frac{\eta_{n-1}^N(v^{\frac{1}{2}})}
{\eta_{n-1}^N(G_{n-1})}\Bigg|^{1+\upsilon}\Bigg] \leq c
$$
for some $\upsilon>0$ which concludes the proof of the Lemma.
\end{proof}

\begin{prop}\label{prop:wlln}
Assume (A1-2). Suppose that for each $n\geq 0$, $1/G_{n}\in\mathscr{L}_{v^{\delta}}$, with $\delta$ as in (A\ref{hyp:1}), then for any 
$\varrho>0$, $f\in\mathscr{L}_{v^{1/(1+\varrho)}}$, $n\geq 0$
$$
\eta_n^N(f) \rightarrow_{\mathbb{P}} \eta_n(f).
$$
\end{prop}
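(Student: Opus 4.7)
The plan is to proceed by induction on $n$. For the base case $n=0$, the samples $X_0^{1:N}$ are i.i.d.\ from $\mu=\eta_0$; by (A\ref{hyp:3}) and Jensen, $\mu(|f|)\leq \|f\|_{v^{1/(1+\varrho)}}\mu(v)^{1/(1+\varrho)}<+\infty$, and the classical WLLN for i.i.d.\ sequences delivers $\eta_0^N(f)\rightarrow_{\mathbb{P}}\eta_0(f)$.

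For the inductive step, assume the claim holds at time $n-1$ for every $\varrho>0$ and every $f\in\mathscr{L}_{v^{1/(1+\varrho)}}$. I would use the decomposition
$$
\eta_n^N(f)-\eta_n(f)=\bigl[\eta_n^N-\Phi_n(\eta_{n-1}^N)\bigr](f)+\bigl[\Phi_n(\eta_{n-1}^N)-\Phi_n(\eta_{n-1})\bigr](f),
$$
noting that $\Phi_n(\eta_{n-1})=\eta_n$, and handle the two terms separately. For the propagation term, write $\Phi_n(\mu)(f)=\mu(Q_n(f))/\mu(G_{n-1})$. Jensen's inequality applied to the Markov kernel $M_n$ combined with (A\ref{hyp:1}) yields
$$
|Q_n(f)(x)|\leq \|f\|_{v^{1/(1+\varrho)}}\|G_{n-1}\|_\infty^{\varrho/(1+\varrho)}Q_n(v)(x)^{1/(1+\varrho)}\leq c\,\|f\|_{v^{1/(1+\varrho)}}\,v(x)^{(1-\delta)/(1+\varrho)},
$$
so $Q_n(f)\in\mathscr{L}_{v^{1/(1+\varrho')}}$ with $\varrho'=(\varrho+\delta)/(1-\delta)>0$. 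Since $G_{n-1}$ is bounded, $G_{n-1}\in\mathscr{L}_{v^{1/(1+\varrho)}}$ trivially, so the induction hypothesis applies to both $Q_n(f)$ and $G_{n-1}$. Moreover $1/G_{n-1}\in\mathscr{L}_{v^\delta}$ forces $G_{n-1}>0$ pointwise, hence $\eta_{n-1}(G_{n-1})>0$, and Slutsky's lemma gives convergence of the ratio to $\eta_n(f)$.

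For the sampling term, condition on $\mathscr{F}_{n-1}^N$, under which $X_n^{1:N}$ are i.i.d.\ from $\Phi_n(\eta_{n-1}^N)$, and truncate at level $K>0$: write $f=f_K+f^K$ with $f_K=f\,\mathbb{I}_{|f|\leq K}$. The bounded piece $[\eta_n^N-\Phi_n(\eta_{n-1}^N)](f_K)$ has conditional variance at most $K^2/N$ and so vanishes in probability by Chebyshev. For the tail, the inequality $|f^K|\leq K^{-\varrho}\|f\|_{v^{1/(1+\varrho)}}^{1+\varrho}v$ gives
$$
\bigl|[\eta_n^N-\Phi_n(\eta_{n-1}^N)](f^K)\bigr|\leq K^{-\varrho}\|f\|_{v^{1/(1+\varrho)}}^{1+\varrho}\bigl(\eta_n^N(v)+\Phi_n(\eta_{n-1}^N)(v)\bigr).
$$
Lemma \ref{lem:v_cont} supplies $\sup_N\mathbb{E}[\eta_n^N(v)]<+\infty$, while $\Phi_n(\eta_{n-1}^N)(v)\leq c\,\eta_{n-1}^N(v^{1-\delta})/\eta_{n-1}^N(G_{n-1})$ by (A\ref{hyp:1}) is tight thanks to the denominator control established above. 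Taking $K$ large therefore makes this contribution uniformly small in probability, which completes the induction.

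The main obstacle is to verify that the drift gain of $1-\delta$ gifted by (A\ref{hyp:1}) really lets the induction hypothesis apply to $Q_n(f)$; this works because the new exponent $(1-\delta)/(1+\varrho)$ is again of the form $1/(1+\varrho')$ with $\varrho'>0$. The second key ingredient is the uniform-in-$N$ moment bound on $\eta_n^N(v)$ from Lemma \ref{lem:v_cont}, without which the truncation estimate could not be closed.
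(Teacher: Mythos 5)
Your argument is correct, and the overall skeleton (induction on $n$; decomposition into a sampling term $[\eta_n^N-\Phi_n(\eta_{n-1}^N)](f)$ and a propagation term $[\Phi_n(\eta_{n-1}^N)-\eta_n](f)$; handling of the propagation term by showing $Q_n(f)$ lands back in a drift class so the induction hypothesis applies to $\eta_{n-1}^N(Q_n(f))/\eta_{n-1}^N(G_{n-1})$) matches the paper's. Where you differ is in how you close the sampling term: the paper invokes a triangular-array law of large numbers, Theorem~A.1 of Douc and Moulines, and then verifies its two hypotheses --- a tightness condition on $\Phi_n(\eta_{n-1}^N)(|f|)$ (which falls out of the propagation bound) and a conditional Lindeberg-type condition, which the paper checks by bounding $\frac{1}{N}\sum_i\mathbb{E}[|f(x_n^i)|\mathbb{I}_{|f(x_n^i)|/N\geq\epsilon}\mid\mathscr{F}_{n-1}^N]$ by $\Phi_n(\eta_{n-1}^N)(|f|^{1+\upsilon})/(\epsilon N)^\upsilon$ for a suitable $\upsilon\in(0,\varrho\wedge\delta/(1-\delta)]$. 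You instead run a self-contained truncation $f=f_K+f^K$: Chebyshev with conditional variance $\leq K^2/N$ for the bounded piece, and the elementary tail estimate $|f^K|\leq K^{-\varrho}\|f\|_{v^{1/(1+\varrho)}}^{1+\varrho}v$ combined with the uniform-in-$N$ moment control from Lemma~\ref{lem:v_cont} (and tightness of $1/\eta_{n-1}^N(G_{n-1})$ via the already-established convergence $\eta_{n-1}^N(G_{n-1})\rightarrow_{\mathbb{P}}\eta_{n-1}(G_{n-1})>0$) for the tail. The two routes cost about the same; yours is more elementary and does not rely on an external black-box WLLN, while the paper's shifts the combinatorics into a cited result. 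One cosmetic remark: your Jensen computation gives $Q_n(f)\in\mathscr{L}_{v^{(1-\delta)/(1+\varrho)}}$, a strictly stronger conclusion than the paper's $Q_n(f)\in\mathscr{L}_{v^{1/(1+\varrho)}}$; since $v\geq 1$ either statement suffices to feed the induction hypothesis, so this is a harmless refinement rather than a necessary one.
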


\begin{proof}
The result is proved by induction.
The case $n=0$ follows by the weak law of large numbers for i.i.d.~random variables; $\eta_0\in\mathcal{P}_v$. Thus, the result is assumed for $n-1$ and we consider $n$.
We have
\begin{equation}
[\eta^N_{n}-\eta_n](f) = [\eta^N_{n}-\Phi_n(\eta^N_{n-1})](f) + [\Phi_n(\eta^N_{n-1})-\eta_n](f)\label{eq:wlln1}.
\end{equation}

We first deal with the second term on the R.H.S.~of \eqref{eq:wlln1}. We have the standard decomposition
$$
[\Phi_n(\eta^N_{n-1})-\eta_n](f) = \Big[\frac{1}{\eta_{n-1}^N(G_{n-1})}-\frac{1}{\eta_{n-1}(G_{n-1})}\Big]\eta_{n-1}^N(Q_n(f)) + 
\frac{1}{\eta_{n-1}(G_{n-1})}[\eta^N_{n}-\eta_n](Q_n(f)).
$$
By the proof of \cite[Lemma 3]{whiteley} $Q_n(f)\in\mathscr{L}_{v^{1/(1+\varrho)}}$ (recall that for any $n\geq 0$, $\|G_n\|_{\infty}<+\infty$), so by the induction hypothesis, it follows that 
\begin{equation}
[\Phi_n(\eta^N_{n-1})-\eta_n](f)\rightarrow_{\mathbb{P}}0\label{eq:wlln2}.
\end{equation}

We now deal with the first term on the R.H.S.~of \eqref{eq:wlln1}. One can use \cite[Theorem A.1]{douc1}, which can be applied by Lemma \ref{lem:v_cont}.
We have to verify Eq.~25 and Eq.~26 of that paper: in the notation of this article, they read:
\begin{itemize}
\item{$\sup_{N}\mathbb{P}(\Phi_{n,N}(\eta_{n-1}^N)(|f|)\geq \kappa)\rightarrow 0$ as $\kappa\rightarrow\infty$.}
\item{$\frac{1}{N}\sum_{i=1}^N\mathbb{E}\,\big[\,|f(x_n^i)|\,\mathbb{I}_{\{ |f(x_n^i)|/N\geq \epsilon \}}\big|\,\mathscr{F}_{n-1}^N\,\big] \rightarrow_{\mathbb{P}} 0$, for any $\epsilon>0$.}
\end{itemize}
The tightness condition (i.e.~the first bullet point), Eq.~25, readily follows from equation \eqref{eq:wlln2}. For the second bullet point, set $0<\upsilon\leq \varrho \wedge \delta/(1-\delta)$, one easily has 
$$
\frac{1}{N}\sum_{i=1}^N\mathbb{E}\,\big[\,|f(x_n^i)|\,\mathbb{I}_{\{ |f(x_n^i)|/N\geq \epsilon \}}\big|\,\mathscr{F}_{n-1}^N\,\big]
\leq \Phi_n(\eta_{n-1}^N)(|f|^{1+\upsilon})\frac{1}{(\epsilon N)^{\upsilon}}.
$$
As $Q_n(|f|^{1+\upsilon})\in\mathscr{L}_{v^{1/(1+\varrho)}}$ by construction, it follows that 
$$
\Phi_n(\eta_{n-1}^N)(|f|^{1+\upsilon})\frac{1}{(\epsilon N)^{\upsilon}} \rightarrow_{\mathbb{P}} 0
$$
which completes the proof.

\end{proof}

\begin{lem}\label{lem:v_cont}
Assume (A1-2). Suppose that for each $n\geq 0$, $1/G_{n}\in\mathscr{L}_{v^{\delta}}$, with $\delta$ as in (A\ref{hyp:1}), then for any $n\geq 0$ there exists a $c<+\infty$ such that for any $N\geq 2$
\begin{eqnarray}
\mathbb{E}[v(X_n^1)] & \leq & c \label{eq:v1}
\end{eqnarray}
\end{lem}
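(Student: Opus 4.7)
I will argue by induction on $n$. The base case $n=0$ is immediate: since $X_0^1 \sim \mu = \eta_0$, hypothesis (A\ref{hyp:3}) gives $\mathbb{E}[v(X_0^1)] = \mu(v) < +\infty$.

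For the inductive step, the joint law displayed in the preliminaries shows that, conditionally on $\mathscr{F}_{n-1}^N$, the particle $X_n^1$ is distributed according to $\Phi_n(\eta_{n-1}^N)$. Hence
$$
\mathbb{E}[v(X_n^1)\mid \mathscr{F}_{n-1}^N] = \Phi_n(\eta_{n-1}^N)(v) = \frac{\eta_{n-1}^N(Q_n(v))}{\eta_{n-1}^N(G_{n-1})}.
$$
The plan is to control the numerator via (A\ref{hyp:1}) and the denominator via the standing assumption $1/G_{n-1}\in\mathscr{L}_{v^\delta}$, reduce everything to $\eta_{n-1}^N(v)$, and then close the induction by taking expectations and invoking exchangeability $\mathbb{E}[\eta_{n-1}^N(v)] = \mathbb{E}[v(X_{n-1}^1)]$.

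For the numerator I fix any $d>\underline{d}$ and rewrite (A\ref{hyp:1}) as $Q_n(v)(x)\leq v(x)^{1-\delta} e^{b_d\mathbb{I}_{C_d}(x)} \leq v(x)^{1-\delta} + e^{(1-\delta)d + b_d}$; Jensen's inequality (concavity of $z\mapsto z^{1-\delta}$) then gives $\eta_{n-1}^N(Q_n(v)) \leq \eta_{n-1}^N(v)^{1-\delta} + K_d$. For the denominator, $1/G_{n-1}\in\mathscr{L}_{v^\delta}$ supplies $G_{n-1}(x)\geq c_1^{-1}v(x)^{-\delta}$, and Jensen applied to the convex map $z\mapsto 1/z$ yields
$$
\eta_{n-1}^N(G_{n-1}) \geq c_1^{-1}\eta_{n-1}^N(v^{-\delta}) \geq \frac{c_1^{-1}}{\eta_{n-1}^N(v^{\delta})}.
$$
I will label the ratio bound obtained by combining these estimates as \eqref{eq:ui_eq_needed}, so it can be quoted in Lemma \ref{lem:ui_term}. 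Concretely it reads
$$
\Phi_n(\eta_{n-1}^N)(v) \leq c_1\,\eta_{n-1}^N(v^{\delta})\bigl[\eta_{n-1}^N(v)^{1-\delta} + K_d\bigr].
$$

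A final application of Jensen, $\eta_{n-1}^N(v^\delta)\leq \eta_{n-1}^N(v)^\delta$, together with $z^\delta\leq 1+z$, collapses the right-hand side to $c_2\eta_{n-1}^N(v) + c_3$, so that
$$
\mathbb{E}[v(X_n^1)] \leq c_2\,\mathbb{E}[\eta_{n-1}^N(v)] + c_3 = c_2\,\mathbb{E}[v(X_{n-1}^1)] + c_3,
$$
and the inductive hypothesis closes the argument, yielding a constant that depends on $n$ but not on $N$. There is no real obstacle here: the only subtle point is recognising that the denominator $\eta_{n-1}^N(G_{n-1})$, which has no obvious deterministic lower bound, can be controlled pathwise by $\eta_{n-1}^N(v^\delta)$ via Jensen, which in turn is absorbed by Jensen again into a power of $\eta_{n-1}^N(v)$ that matches the $(1-\delta)$ drift coming out of the numerator. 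This mechanism is exactly what the adjacent lemma \ref{lem:ui_term} borrows, which is why the intermediate ratio bound deserves a label.
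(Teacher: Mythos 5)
Your proof is correct, and it takes a mildly different route from the paper's. Both start identically: induction, the conditional identity $\mathbb{E}[v(X_n^1)\mid\mathscr{F}_{n-1}^N]=\Phi_n(\eta_{n-1}^N)(v)=\eta_{n-1}^N(Q_n(v))/\eta_{n-1}^N(G_{n-1})$, and the same Jensen lower bound $\eta_{n-1}^N(G_{n-1})\geq \|1/G_{n-1}\|_{v^\delta}^{-1}/\eta_{n-1}^N(v^\delta)$. The divergence is in how the resulting product of two empirical averages is treated. The paper keeps it as $\mathbb{E}\big[\eta_{n-1}^N(Q_n(v))\,\eta_{n-1}^N(v^\delta)\big]$, expands the product into the $\frac1{N^2}\sum_i$ diagonal plus the $\frac1{N^2}\sum_{i\neq j}$ off-diagonal, and then applies H\"older to the cross terms $\mathbb{E}[v(X_{n-1}^1)^{1-\delta}v(X_{n-1}^2)^{\delta}]\leq\mathbb{E}[v(X_{n-1}^1)]$; this is a moment calculation in expectation. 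You instead apply Jensen a second and third time \emph{pathwise} — concavity of $z\mapsto z^{1-\delta}$ to get $\eta_{n-1}^N(Q_n(v))\leq\eta_{n-1}^N(v)^{1-\delta}+K_d$, and concavity of $z\mapsto z^\delta$ to get $\eta_{n-1}^N(v^\delta)\leq\eta_{n-1}^N(v)^\delta$ — so that the two powers recombine to exactly $\eta_{n-1}^N(v)$ before any expectation is taken, and the closing step is just $\mathbb{E}[\eta_{n-1}^N(v)]=\mathbb{E}[v(X_{n-1}^1)]$ by exchangeability. Your version avoids opening the square and the H\"older-on-exchangeable-pairs step, which is arguably neater; the paper's version is marginally more economical notationally because it uses the multiplicative drift $Q_n(v)\leq c v^{1-\delta}$ (which follows directly from (A1) since $e^{b_d\mathbb{I}_{C_d}}\leq e^{b_d}$) in place of your explicit $v^{1-\delta}+K_d$ split. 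Both yield the same $n$-dependent, $N$-uniform constant. One small bookkeeping remark: the paper's labelled display \eqref{eq:ui_eq_needed} is the denominator inequality $\eta_{n-1}^N(G_{n-1})\geq\|1/G_{n-1}\|_{v^\delta}^{-1}/\eta_{n-1}^N(v^\delta)$ alone, not the combined ratio bound you label with the same tag; Lemma \ref{lem:ui_term} only needs that denominator control (with $\delta$ replaced by $\delta/2$), so if you wanted to reuse your estimate there you would carve out just that piece.
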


\begin{proof}
We proceed via induction.  The case $n=0$ follows as $\eta_0\in\mathcal{P}_v$. Thus, we assume for $n-1$ and consider $n$:
$$
\mathbb{E}[v(X_n^1)] = \mathbb{E}\Big[\frac{\eta_{n-1}^N(Q_n(v))}{\eta_{n-1}^N(G_{n-1})}\Big].
$$
Now, consider
\begin{eqnarray}
\eta_{n-1}^N(G_{n-1}) & = & \eta_{n-1}^N\Big(G_{n-1}v^{\delta}\frac{1}{v^{\delta}}\Big)\nonumber\\
&\geq &  \|1/G_{n-1}\|_{v^{\delta}}^{-1} \eta_{n-1}^N\Big(\frac{1}{v^{\delta}}\Big) \nonumber\\
&\geq & \|1/G_{n-1}\|_{v^{\delta}}^{-1}\frac{1}{\eta_{n-1}^N(v^{\delta})}\label{eq:ui_eq_needed}.
\end{eqnarray}
So, we have that
$$
\mathbb{E}[v(X_n^1)] \leq \|1/G_{n-1}\|_{v^{\delta} }  \mathbb{E}[\eta_{n-1}^N(Q_n(v))\eta_{n-1}^N(v^{\delta})].
$$
Now via the multiplicative drift $Q_n(v) \leq c v^{1-\delta}$, so
\begin{eqnarray*}
\mathbb{E}[\eta_{n-1}^N(Q_n(v))\eta_{n-1}^N(v^{\delta})] & \leq & c\mathbb{E}\Big[\frac{1}{N^2}\Big(\sum_{i} v(X_{n-1}^i) + \sum_{i\neq j} v(X_{n-1}^i)^{1-\delta} v(X_{n-1}^j)^{\delta} \Big)\Big]\\
& = &  c \Big(\mathbb{E}[v(X_{n-1}^1)] + \frac{N-1}{N}\mathbb{E}[v(X_{n-1}^1)^{1-\delta} v(X_{n-1}^2)^{\delta}]\Big) \\
&\leq & 2c \mathbb{E}[v(X_{n-1}^1)]
\end{eqnarray*}
where we have applied H\"older to get to the last line; the induction hypothesis completes the proof of \eqref{eq:v1}.

\end{proof}

\section{Proofs for the Asymptotic Variance}

We give the proofs which are used for Theorem \ref{theo:asymp_var_bound}, bounding the asymptotic variance. This is broken into three sections: controlling the forward part of the asymptotic variance:
$$
\frac{h_{p,n}(x)}{\eta_p(Q_{p,n}(1))Q_{p,n}(1)(x)} \sum_{q=p}^{n}(\delta_{x}\otimes\eta_p-\eta_p\otimes\delta_{x})(\overline{Q}_{p,q}( (f_q\otimes 1) \overline{Q}_{q,n}(1)))
$$
controlling the backward part of the asymptotic variance
$$
\frac{h_{p,n}(x)}{\eta_p(Q_{p,n}(1))} \sum_{q=0}^{p-1}\eta_p(Q_{p,n}(1)[M_{p:q}(f_q)(x) - M_{p:q}(f_q)])
$$
and the technical results used to achieve this. Recall $\|f\|_{v^{\alpha}}$ is defined in \eqref{eq:f_sup_v}.

The following additional notations are used in this Appendix.
We write $\overline{\mathbb{E}}_{\mu\otimes\eta}$ as the expectation w.r.t.~the inhomogeneous Markov chain
$\{\overline{X}_p\}_{p\geq 0}$ on $\overline{\mathsf{X}}:=\mathsf{X}^2$ with initial distribution $\mu\otimes \eta$ and transition $H_p(x_{p-1},x_p)H_p(y_{p-1},y_p)\lambda(dx_p)\otimes\lambda(dy_p)$. We also use the notation $\overline{M}_{p,q}^{d}:=\sum_{k=p}^{q-1}\mathbb{I}_{\overline{C}_d}(\overline{X}_k)\mathbb{I}_{\overline{C}_d}(\overline{X}_{k+1})$.

\subsection{Controlling the Forward Part}

\begin{prop}\label{prop:forward}
Assume (A\ref{hyp:1}-\ref{hyp:3}), (A\ref{hyp:2}-\ref{hyp:5}). Then if $\|f\|_{v^{\alpha}}<+\infty$, $\alpha\in(0,1/3)$ there exist a $c<+\infty$ 
and $\rho\in(0,1)$ which depends only upon the constants in (A\ref{hyp:1}), (A\ref{hyp:2}-\ref{hyp:5}), such that for any $x\in\mathsf{X}$
\begin{equation}
\frac{h_{p,n}(x)}{\eta_p(Q_{p,n}(1))Q_{p,n}(1)(x)} \sum_{q=p}^{n}(\delta_{x}\otimes\eta_p-\eta_p\otimes\delta_{x})(\overline{Q}_{p,q}( (f_q\otimes 1) \overline{Q}_{q,n}(1)))
\leq c \|f\|_{v^{\alpha}} v(x)^{3\alpha}\Big\{1+ \frac{\rho(1-\rho^{n-p})}{1-\rho}\Big\}.
\label{eq:forward_master_ineq}
\end{equation}
\end{prop}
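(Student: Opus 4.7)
The plan has three main steps. First, I would simplify algebraically: since $h_{p,n}(x)/(\eta_p(Q_{p,n}(1))\,Q_{p,n}(1)(x)) = \eta_p(Q_{p,n}(1))^{-2}$ and $\overline{Q}_{p,q}((f_q \otimes 1)\overline{Q}_{q,n}(1))(x,y) = Q_{p,q}(f_q Q_{q,n}(1))(x)\,Q_{p,n}(1)(y)$, each summand on the left-hand side of \eqref{eq:forward_master_ineq} can be rewritten as
$$h_{p,n}(x)\,\big[\Pi^{p,n}_q(f_q)(x) - \widetilde{\eta}_{p,n}(\Pi^{p,n}_q(f_q))\big],$$
where $\Pi^{p,n}_q(f_q)(x) := Q_{p,q}(f_q Q_{q,n}(1))(x)/Q_{p,n}(1)(x)$ is the time-$q$ marginal of the Feynman-Kac path from $x$ conditioned to survive up to time $n$ (the Doob $h$-transform of $Q_{p,q}$ by $Q_{q,n}(1)$), and $\widetilde{\eta}_{p,n}(dx) := \eta_p(dx)\,Q_{p,n}(1)(x)/\eta_p(Q_{p,n}(1))$ is the associated tilted initial law. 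The bracketed quantity is therefore the difference of two expectations of $f_q$ under the same Doob dynamics started from different initial distributions.

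Second, I would bound this bracket by a coupling on $\overline{\mathsf{X}}$. The idea is to express it as a tensor Feynman-Kac expectation of $f_q(X_q) - f_q(X'_q)$ against the two-coordinate chain $\{\overline{X}_k\}$ started from $\delta_x \otimes \widetilde{\eta}_{p,n}$, suitably reweighted by the Feynman-Kac potentials. Whenever the chain jointly visits $\overline{C}_d = C_d \times C_d$, the local minorization of (A\ref{hyp:2}) provides a coupling opportunity of probability at least $\varepsilon_d > 0$; hence the uncoupled contribution after $q - p$ steps is dominated by $(1-\varepsilon_d)^{\overline{M}_{p,q}^d}\,\|f\|_{v^\alpha}(v(X_q)^\alpha + v(X'_q)^\alpha)$. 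The multiplicative drift (A\ref{hyp:1}) lifts to the tensor chain and, via the renewal/excursion machinery behind \cite[Lemma 3, Proposition 1]{whiteley}, yields
$$\overline{\mathbb{E}}_{\delta_x \otimes \widetilde{\eta}_{p,n}}\!\big[(1-\varepsilon_d)^{\overline{M}_{p,q}^d}(v(X_q)^\alpha + v(X'_q)^\alpha)\big] \leq c\,\rho^{q-p}\,v(x)^\alpha$$
for some $\rho \in (0,1)$ depending only on the constants in the hypotheses.

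Third, I would assemble the bound. Iterating (A\ref{hyp:6}) together with \cite[Lemma 3]{whiteley} produces the uniform estimate $h_{p,n}(x) \leq c\,v(x)^{2\alpha}$; combined with the contraction of the previous step, each summand for $q \geq p+1$ is bounded by $c\,\|f\|_{v^\alpha}\,v(x)^{3\alpha}\,\rho^{q-p}$. For $q = p$ the bracket equals $f_p(x) - \widetilde{\eta}_{p,n}(f_p)$, which is controlled directly by $\|f\|_{v^\alpha}(v(x)^\alpha + \widetilde{\eta}_{p,n}(v^\alpha)) \leq c\,\|f\|_{v^\alpha}\,v(x)^\alpha$ (the $v^\alpha$-moment of $\widetilde{\eta}_{p,n}$ being uniformly bounded by Whiteley's moment estimates), producing the standalone ``$1$'' in \eqref{eq:forward_master_ineq}. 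Summing the geometric series from $q = p+1$ to $n$ yields the factor $\rho(1-\rho^{n-p})/(1-\rho)$ and completes the bound. The main obstacle will be establishing the contraction of the second step: under only the local drift-plus-Doeblin hypotheses, one must carefully combine the lifted multiplicative drift on the tensor chain with a Kendall/Nummelin-type renewal argument to ensure sufficiently many joint visits to $\overline{C}_d$, while the $v(x)^\alpha$ factor on the right comes from absorbing the drift penalty incurred by starting from the single point $x$ rather than from a probability measure with controlled $v$-moment.
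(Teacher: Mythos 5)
Your proposal is essentially the paper's argument, repackaged: you recast $h_{p,n}(x)/\big(\eta_p(Q_{p,n}(1))Q_{p,n}(1)(x)\big)\cdot(\delta_x\otimes\eta_p-\eta_p\otimes\delta_x)(\overline{Q}_{p,q}((f_q\otimes 1)\overline{Q}_{q,n}(1)))$ as $h_{p,n}(x)\big[\Pi^{p,n}_q(f_q)(x)-\widetilde\eta_{p,n}(\Pi^{p,n}_q(f_q))\big]$ and then invoke a drift-plus-Doeblin coupling on the tensorized Doob chain, which is exactly the regeneration argument used in the paper (through the counting functional $\overline{M}^{d}_{p,q}$ and the $\beta$-split). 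The differences are organizational: you merge the cases $p<q<n$ and $q=n$ into a single uniform Doob-transform contraction, whereas the paper isolates $q=n$ (where $Q_{q,n}(1)\equiv 1$ lets it cite \cite[Theorem 1]{whiteley} verbatim) and handles $p<q<n$ separately because the extra multiplicative weight $Q_{q,n}(1)$ on the endpoint requires the bookkeeping carried out in Lemmas \ref{lem:easy_part_forward} and \ref{lem:forward_hard_part}. Your framing is arguably cleaner but outsources the load-bearing step --- the claimed contraction $\overline{\mathbb{E}}_{\delta_x\otimes\widetilde\eta_{p,n}}\big[(1-\varepsilon_d)^{\overline{M}^d_{p,q}}(v(X_q)^\alpha+v(X_q')^\alpha)\big]\le c\,\rho^{q-p}v(x)^\alpha$ --- to ``renewal/excursion machinery'' without establishing that the drift (A\ref{hyp:1}) and minorization (A\ref{hyp:2}) transfer to the $n$-dependent Doob kernels $\widehat{Q}_q^{(n)}$ uniformly in $n$; this is precisely where Whiteley's Lemma~10, Proposition~2 and the uniform lower bound on $\eta_p(C_d)$ are needed, and where the paper's appendix lemmas do their work.

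Two points of imprecision in your tracking of $v$-powers. First, by \cite[Propositions 1--2, Lemma 3]{whiteley} one has $\sup_{n\ge 1}\sup_{0\le p\le n}\|h_{p,n}\|_{v^\alpha}<\infty$, i.e.\ $h_{p,n}(x)\le c\,v(x)^\alpha$, not merely $c\,v(x)^{2\alpha}$ as you wrote. Second, the contraction estimate for the Doob-tilted chain costs an extra factor of $v(x)^\alpha$ beyond what you state (the correct version, tracked in \cite[eq.~(61)]{whiteley} and in Lemma \ref{lem:easy_part_forward} via $T_{p,q}(v^\alpha h_{q,n})(x)\le c\,v(x)^{2\alpha}/h_{p,q}(x)\cdot\|h_{p,q}\|_{v^{2\alpha}}$, is $\sim v(x)^{2\alpha}$ rather than $v(x)^\alpha$). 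These two imprecisions cancel in the final exponent $3\alpha$, so the stated conclusion of the proposition is unaffected, but a correct proof must be careful here since $\alpha$ is later constrained by $\eta_p(v^{6\alpha})<\infty$ in Theorem \ref{theo:asymp_var_bound}.
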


\begin{proof}
We break up our proof into controlling the summands on the L.H.S.~of \eqref{eq:forward_master_ineq}.

\textbf{Case $q=p$}. We first consider the case $q=p$ in the summation on the L.H.S.~of \eqref{eq:forward_master_ineq}. Then we have 
$$
(\delta_{x}\otimes\eta_p-\eta_p\otimes\delta_{x})(\overline{Q}_{p,q}( (f_q\otimes 1) \overline{Q}_{q,n}(1))) =
(\delta_{x}\otimes\eta_p-\eta_p\otimes\delta_{x})( (f_p\otimes 1) \overline{Q}_{p,n}(1)).
$$
Then as $f_p\in\mathscr{L}_{v^{\alpha}}$, we have
\begin{equation}
\delta_{x}\otimes\eta_p( (f_q\otimes 1) \overline{Q}_{p,n}(1)) \leq  \|f\|_{v^{\alpha}} v(x)^{\alpha} Q_{p,n}(1)(x) \eta_p(Q_{p,n}(1)).
\label{eq:forward_eq1}
\end{equation}
Thus by using a similar argument to \eqref{eq:forward_eq1}
$$
(\delta_{x}\otimes\eta_p-\eta_p\otimes\delta_{x})( (f_p\otimes 1) \overline{Q}_{p,n}(1)) \leq c \|f\|_{v^{\alpha}} Q_{p,n}(1)(x) [v(x)^{\alpha} \eta_p(Q_{p,n}(1))
+ \eta_p(v^{\alpha} Q_{p,n}(1))].
$$
Hence, we have that 
$$
\frac{h_{p,n}(x)}{\eta_p(Q_{p,n}(1))Q_{p,n}(1)(x)} (\delta_{x}\otimes\eta_p-\eta_p\otimes\delta_{x})( (f_p\otimes 1) \overline{Q}_{p,n}(1)) \leq
$$
\begin{equation}
c\|f\|_{v^{\alpha}}\frac{h_{p,n}(x)}{\eta_p(Q_{p,n}(1))} [v(x)^{\alpha} \eta_p(Q_{p,n}(1))
+ \eta_p(v^{\alpha} Q_{p,n}(1))]\label{eq:forward_q=p}
\end{equation}
Now for the first term on the R.H.S.~of \eqref{eq:forward_q=p} we have
$$
\frac{h_{p,n}(x)}{\eta_p(Q_{p,n}(1))} v(x)^{\alpha} \eta_p(Q_{p,n}(1)) \leq c v(x)^{2\alpha}
$$
where we have used Propositions 1 and 2 and Lemma 3 of \cite{whiteley}, i.e.~that $\sup_{n\geq 1}\sup_{0\leq p \leq n}\|h_{p.n}\|_{v^{\alpha}}<+\infty$.
For the second term on the R.H.S.~of \eqref{eq:forward_q=p} we have for any $r\in[\underline{d},\infty)$
$$
\frac{h_{p,n}(x)}{\eta_p(Q_{p,n}(1))}\eta_p(v^{\alpha} Q_{p,n}(1)) = h_{p,n}(x) \eta_p(v^{\alpha} h_{p,n}) \leq 
c v(x)^{\alpha} \eta_p(v^{2\alpha})
$$
where we again use $\sup_{n\geq 1}\sup_{0\leq p \leq n}\|h_{p.n}\|_{v^{\alpha}}<+\infty$.
By Proposition 1 of \cite{whiteley} $\sup_{p\geq 0}\|\eta_p(v^{2\alpha})\|_{v^{\alpha}} < +\infty$, thus
$$
\frac{h_{p,n}(x)}{\eta_p(Q_{p,n}(1))}\eta_p(v^{\alpha} Q_{p,n}(1)) \leq c v(x)^{\alpha}.
$$
Thus for the case $q=p$ we have established that
\begin{equation}
\frac{h_{p,n}(x)}{\eta_p(Q_{p,n}(1))Q_{p,n}(1)(x)} (\delta_{x}\otimes\eta_p-\eta_p\otimes\delta_{x})( (f_p\otimes 1) \overline{Q}_{p,n}(1))
\leq c\|f\|_{v^{\alpha}} v(x)^{2\alpha}.
\label{eq:forward_q=p_main_eq}
\end{equation}

\textbf{Case $q=n$}. Second, we consider the case $q=n$ in the summation on the L.H.S.~of \eqref{eq:forward_master_ineq}. Then we have 
$$
(\delta_{x}\otimes\eta_p-\eta_p\otimes\delta_{x})(\overline{Q}_{p,q}( (f_q\otimes 1) \overline{Q}_{q,n}(1))) =
(\delta_{x}\otimes\eta_p-\eta_p\otimes\delta_{x})( \overline{Q}_{p,n}((f_p\otimes 1)) ).
$$
Then, one can apply the proof of Theorem 1 of \cite{whiteley} to show that there exist a $\rho\in(0,1)$ (which depends upon the the constants in (A\ref{hyp:1}-\ref{hyp:2}), (A\ref{hyp:4}-\ref{hyp:5}))
$$
\frac{(\delta_{x}\otimes\eta_p-\eta_p\otimes\delta_{x})( \overline{Q}_{p,n}((f_p\otimes 1)) )}{\eta_p(Q_{p,n}(1))Q_{p,n}(1)(x)}
\leq c\|f\|_{v^{\alpha}} \frac{v_{p,n,\alpha}(x)}{\|h_{p,n}\|_{v^{\alpha}}}\mu(v^{\alpha}) \rho^{n-p}
$$
where  $v_{p,n,\alpha}(x) = v(x)^{\alpha}\|h_{p,n}\|_{v^{\alpha}}/h_{p,n}(x)$. Thus we have established for $q=n$:
\begin{equation}
h_{p,n}(x)
\frac{(\delta_{x}\otimes\eta_p-\eta_p\otimes\delta_{x})( \overline{Q}_{p,n}((f_p\otimes 1)) )}{\eta_p(Q_{p,n}(1))Q_{p,n}(1)(x)}
\leq c\|f\|_{v^{\alpha}} v(x)^{\alpha}\mu(v^{\alpha}) \rho^{n-p}.
\label{eq:forward_q=n_main_eq}
\end{equation}

\textbf{Case $p<q<n$}. Lastly, we consider the case $p<q<n$ in the summation on the L.H.S.~of \eqref{eq:forward_master_ineq}. Using almost the same calculations
as \cite{whiteley} Theorem 1 (which themselves rely on the proofs of \cite{douc,klep}) we have for arbitrary $d$, $\beta\in(0,1)$:
$$
(\delta_{x}\otimes\eta_p-\eta_p\otimes\delta_{x})(\overline{Q}_{p,q}( (f_q\otimes 1) \overline{Q}_{q,n}(1)))
\leq  2 \|f\|_{v^{\alpha}}\bigg\{ \overline{\mathbb{E}}_{\delta_x\otimes\eta_p}[\prod_{s=p}^{q-1} \overline{G}_q(\overline{X}_s)
\overline{v}(\overline{X}_q)^{\alpha} \overline{Q}_{q,n}(1)(\overline{X}_{q})\times
 $$
\begin{equation}
 \mathbb{I}_{\{\overline{M}_{p,q}^{d}\geq \beta(q-p)\}}\rho_d^{\overline{M}_{p,q}^{d}}] +
\overline{\mathbb{E}}_{\delta_x\otimes\eta_p}[\prod_{s=p}^{q-1} \overline{G}_q(\overline{X}_s)
\overline{v}(\overline{X}_q)^{\alpha} \overline{Q}_{q,n}(1)(\overline{X}_{q})
 \mathbb{I}_{\{\overline{M}_{p,q}^{d}<\beta(q-p)\}}\rho_d^{\overline{M}_{p,q}^{d}}]
\bigg\}
\label{eq:forward_p<q<n}
\end{equation}
where $\rho_d = 1 - \bigg(\frac{\epsilon_d^-}{\epsilon_d^+}\bigg)^2$.
We begin by considering the first term on the R.H.S.~of \eqref{eq:forward_p<q<n}, when multiplied by the term outside the summation on the L.H.S.~of \eqref{eq:forward_master_ineq}.
As in Theorem 1 of \cite{whiteley} as $\rho_d<1$ we have:
$$
\frac{h_{p,n}(x)}{\eta_p(Q_{p,n}(1))Q_{p,n}(1)(x)} \overline{\mathbb{E}}_{\delta_x\otimes\eta_p}[\prod_{s=p}^{q-1} \overline{G}_q(\overline{X}_s)
\overline{v}(\overline{X}_q)^{\alpha} \overline{Q}_{q,n}(1)(\overline{X}_{q})
 \mathbb{I}_{\{\overline{M}_{p,q}^{d}\geq \beta(q-p)\}}\rho_d^{\overline{M}_{p,q}^{d}}]
\leq
$$
$$
 \rho_d^{\beta(q-p)} h_{p,n}(x) \frac{Q_{p,q}(v^{\alpha} Q_{q,n}(1))(x)}{Q_{p,q}(Q_{q,n}(1))(x)} \frac{\eta_p[Q_{p,q}(v^{\alpha} Q_{q,n}(1))]}{\eta_p[Q_{p,q}(Q_{q,n}(1))]}.
$$
Then, one can apply Lemma \ref{lem:easy_part_forward}, to show that
$$
\frac{h_{p,n}(x)}{\eta_p(Q_{p,n}(1))Q_{p,n}(1)(x)} \overline{\mathbb{E}}_{\delta_x\otimes\eta_p}[\prod_{s=p}^{q-1} \overline{G}_q(\overline{X}_s)
\overline{v}(\overline{X}_q)^{\alpha} \overline{Q}_{q,n}(1)(\overline{X}_{q})
 \mathbb{I}_{\overline{M}_{p,q}^{d}\geq \beta(q-p)}\rho_d^{\overline{M}_{p,q}^{d}}]
\leq c \|f\|_{v^{\alpha}}\rho_d^{\beta(q-p)} v(x)^{3\alpha}
$$
Now consider the second term on the R.H.S.~of \eqref{eq:forward_p<q<n}, when multiplied by the term outside the summation on the L.H.S.~of \eqref{eq:forward_master_ineq}. We have
$$
\frac{h_{p,n}(x)\overline{\mathbb{E}}_{\delta_x\otimes\eta_p}\Big[\Big\{\prod_{s=p}^{q-1} \overline{G}_s(\overline{X}_s)\Big\}
\overline{v}(\overline{X}_q)^{\alpha} \overline{Q}_{q,n}(1)(\overline{X}_q)
\mathbb{I}_{ \overline{M}_{p,q}^{d} <\beta(q-p) }
\Big] }{Q_{p,n}(1)(x)\eta_p(Q_{p,n}(1))} \leq 
$$
$$
 c(d,\alpha,\beta) \mu(v^{3\alpha})v(x)^{3\alpha} \exp\{-(q-p)c(d,\alpha,\beta)]\}.
$$
where we note that $d$ was arbitrary above and we have applied Lemma \ref{lem:forward_hard_part}. Then, one can make $d$ larger so that we have
for $p<q<n$:
\begin{equation}
\frac{h_{p,n}(x)}{\eta_p(Q_{p,n}(1))Q_{p,n}(1)(x)}
(\delta_{x}\otimes\eta_p-\eta_p\otimes\delta_{x})(\overline{Q}_{p,q}( (f_q\otimes 1) \overline{Q}_{q,n}(1)))
\leq c \|f\|_{v^{\alpha}}  \rho^{q-p} v(x)^{3\alpha}
\label{eq:forward_p<q<n_main_eq}
\end{equation}
where $\rho\in(0,1)$ depends upon the constants in (A\ref{hyp:1}), (A\ref{hyp:3}-\ref{hyp:5}) as well as $\alpha$.

Then, combining \eqref{eq:forward_q=p_main_eq}, \eqref{eq:forward_q=n_main_eq} and \eqref{eq:forward_p<q<n_main_eq}, we have proved that
for any $x\in\mathsf{X}$
$$
\frac{h_{p,n}(x)}{\eta_p(Q_{p,n}(1))Q_{p,n}(1)(x)} \sum_{q=p}^{n}(\delta_{x}\otimes\eta_p-\eta_p\otimes\delta_{x})(\overline{Q}_{p,q}( (f_q\otimes 1) \overline{Q}_{q,n}(1)))
\leq c_{\mu}\|f\|_{v^{\alpha}}v(x)^{3\alpha}[1 + \sum_{q=p+1}^n \rho^{q-p}]
$$
from which we can conclude.
\end{proof}

\subsection{Controlling the Backward Part}

\begin{prop}\label{prop:backward}
Assume (A\ref{hyp:1}-\ref{hyp:5}). Then if $\|f\|_{v^{\alpha}}<+\infty$ for $\alpha\in(0,1/2)$ there exist a $c<+\infty$ 
 which depends only upon the constants in (A\ref{hyp:1}) and (A\ref{hyp:6}-\ref{hyp:5}),
 such that for any $x\in\mathsf{X}$, $p\geq 1$
\begin{equation}
\frac{h_{p,n}(x)}{\eta_p(Q_{p,n}(1))} \sum_{q=0}^{p-1}\eta_p(Q_{p,n}(1)[M_{p:q}(f_q)(x) - M_{p:q}(f_q)])
\leq c \|f\|_{v^{\alpha}} v(x)^{2\alpha}.
\label{eq:backward_master_ineq}
\end{equation}
\end{prop}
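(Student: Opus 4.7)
The plan is to first rewrite the left-hand side of \eqref{eq:backward_master_ineq} in a form that exposes backward ergodic mixing. Introduce the smoothing probability measure
$$\pi_{p,n}(dy) := \frac{Q_{p,n}(1)(y)\,\eta_p(dy)}{\eta_p(Q_{p,n}(1))}.$$
Each summand can be written as
$$\frac{\eta_p(Q_{p,n}(1)[M_{p:q}(f_q)(x) - M_{p:q}(f_q)])}{\eta_p(Q_{p,n}(1))} = \int \pi_{p,n}(dy)\,[M_{p:q}(f_q)(x) - M_{p:q}(f_q)(y)],$$
so that the LHS of \eqref{eq:backward_master_ineq} equals $h_{p,n}(x)\sum_{q=0}^{p-1} \int \pi_{p,n}(dy)\,[M_{p:q}(f_q)(x) - M_{p:q}(f_q)(y)]$. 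The problem is thereby reduced to establishing geometric ergodicity of the backward semigroup $M_{p:q}$ against test functions of $v^{\alpha}$-growth.

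The core analytic step is to prove an estimate of the form
$$|M_{p:q}(f_q)(x) - M_{p:q}(f_q)(y)| \leq c\,\|f\|_{v^{\alpha}}[v(x)^{\alpha} + v(y)^{\alpha}]\,\rho^{p-q}$$
for some $\rho\in(0,1)$ depending only on the constants in (A\ref{hyp:1}), (A\ref{hyp:2}), (A\ref{hyp:4}) and (A\ref{hyp:5}). I would mirror the strategy used for the ``Case $p<q<n$'' of Proposition \ref{prop:forward}: run two independent copies of the backward chain on the product space, partition the block $\{q,\dots,p\}$ into good/bad sub-intervals according to whether both chains simultaneously occupy $\overline{C}_d := C_d\times C_d$, use the two-sided local bound $\tilde{\epsilon}_d^{-}\leq G_{r-1}H_r \leq \tilde{\epsilon}_d^{+}$ on $C_d\times C_d$ coming from (A\ref{hyp:2}) and (A\ref{hyp:4}) to extract a coupling factor $\rho_d = 1-(\tilde{\epsilon}_d^-/\tilde{\epsilon}_d^+)^2$ per good sub-interval, and invoke the drift condition (A\ref{hyp:1}) together with a choice of $\beta\in(0,1)$ to control the probability of long bad excursions. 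A suitably large $d$ then combines these bounds into a single geometric rate, and (A\ref{hyp:6}) supplies the polynomial envelope $[v(x)^{\alpha}+v(y)^{\alpha}]$ arising from the endpoint terms of the product semigroup.

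With the ergodicity estimate in hand, the remaining work is routine. By \cite[Propositions 1--2 and Lemma 3]{whiteley}, one has $\sup_{0\leq p\leq n}\|h_{p,n}\|_{v^{\alpha}}<\infty$, so $h_{p,n}(x)\leq c v(x)^{\alpha}$, and $\pi_{p,n}(v^{\alpha})$ is bounded uniformly in $p,n$ by the same tools combined with (A\ref{hyp:5}) and (A\ref{hyp:6}). Integrating the ergodicity estimate against $\pi_{p,n}(dy)$ and summing the resulting geometric series $\sum_{q=0}^{p-1}\rho^{p-q}\leq \rho/(1-\rho)$ then yields the desired bound $c\|f\|_{v^{\alpha}} v(x)^{2\alpha}$; the $v(x)^{2\alpha}$ factor arises as the product of $h_{p,n}(x)\leq c v(x)^{\alpha}$ and the $v(x)^{\alpha}$ term left over after integrating out $v(y)^{\alpha}$ against $\pi_{p,n}$, with $v\geq 1$ absorbing the constant contribution.

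The main obstacle I foresee is the $\eta_{r-1}$-dependence of the backward kernel $M_{r,\eta_{r-1}}$ through the normaliser $\eta_{r-1}(G_{r-1}H_r(\cdot,x_r))$: one cannot couple two backward trajectories via an autonomous product kernel, as is done in the forward proof using the clean tensor product $H_p\otimes H_p$. Assumption (A\ref{hyp:6}) is the key instrument for circumventing this, since it bounds $G_{r-1}(x)H_r(x,y)/\eta_{r-1}(G_{r-1}H_r(\cdot,y))$ by $c\,v(x)^{\alpha}v(y)^{\alpha}$ uniformly in $r$, allowing the $\eta_{r-1}$-dependent normaliser to be absorbed into $v^{\alpha}$-factors so that a Doeblin--drift coupling argument can be executed on the product backward chain in the same spirit as in Proposition \ref{prop:forward}.
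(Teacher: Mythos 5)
Your opening reduction is correct and matches the paper: you rewrite the left side as $h_{p,n}(x)\sum_{q}\int\pi_{p,n}(dy)\,[M_{p:q}(f_q)(x)-M_{p:q}(f_q)(y)]$ and reduce the problem to a geometric forgetting estimate for $M_{p:q}$ in a weighted norm, after which summing a geometric series and using $\sup\|h_{p,n}\|_{v^\alpha}<\infty$, $\sup_p\|\eta_p\|_{v^{2\alpha}}<\infty$ gives the claim. However, the heart of the matter is how to obtain that forgetting estimate, and there your plan has a genuine gap. You propose to couple two copies of the \emph{backward} chain on the product space and run the Doeblin--drift argument directly on $M_{r,\eta_{r-1}}\otimes M_{r,\eta_{r-1}}$, claiming (A\ref{hyp:6}) lets you ``absorb the normaliser.'' But (A\ref{hyp:6}) is a one-sided \emph{upper} bound on the backward transition density, $G_{r-1}(x)H_r(x,y)/\eta_{r-1}(G_{r-1}H_r(\cdot,y))\le c\,v(x)^\alpha v(y)^\alpha$; a coupling/local-Doeblin argument on the backward chain needs a local \emph{minorization} (a lower bound on the density over $C_d\times C_d$, uniformly in the time-inhomogeneous normaliser $\eta_{r-1}(G_{r-1}H_r(\cdot,x_r))$) and a one-step \emph{drift} for $M_{r,\eta_{r-1}}$, and neither follows from (A\ref{hyp:1})--(A\ref{hyp:5}) plus (A\ref{hyp:6}) as you have set them up: you would in particular need a uniform-in-$r$ upper bound on $\eta_{r-1}(G_{r-1}H_r(\cdot,x_r))$ for $x_r\in C_d$, which is not given.

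The paper circumvents this precisely by \emph{not} coupling backward trajectories at all. It invokes the reversal identity of \cite[Lemma 4.3]{dds1},
\begin{equation*}
M_{p:q}(f)(x)=\frac{\eta_q\big(f\,Q_{q,p-1}[Q_p(\cdot,x)]\big)}{\eta_q\big(Q_{q,p-1}[Q_p(\cdot,x)]\big)},
\end{equation*}
which recasts the backward conditional expectation as a ratio of Feynman--Kac integrals driven by the \emph{forward} semigroup $Q_{q,p-1}$. Lemma \ref{lem:vnorm_dob_backward} then forms the two-point difference
\begin{equation*}
M_{p:q}(f)(x)-M_{p:q}(f)(z)
\end{equation*}
as a product-space quantity on which the forward coupling, local Doeblin and drift estimates from \cite{whiteley,douc,klep} (already needed for the forward part) apply, with (A\ref{hyp:6}) entering only to control $Q_p(\cdot,x)/\eta_{p-1}[Q_p(\cdot,x)]\in\mathscr{L}_{\overline{v}^\alpha}$ at the endpoint. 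This yields $|M_{p:q}(f)(x)-M_{p:q}(f)(z)|\le c\,\overline{v}(x,z)^\alpha\rho^{p-q-1}$, from which the Proposition follows exactly along the lines of your reduction. So your strategy is sound at the outer level but is missing the central ingredient: without the reversal formula, the backward Doeblin--drift coupling you describe cannot be carried out under the stated assumptions.
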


\begin{proof}
Consider the summand in \eqref{eq:backward_master_ineq}
$$
\eta_p(Q_{p,n}(1)[M_{p:q}(f_q)(x) - M_{p:q}(f_q)]) = \|f\|_{v^{\alpha}}
\eta_p\Big(\Big[Q_{p,n}(1)v(x)^{\alpha}v^{\alpha}\Big]\Big[\frac{[M_{p:q}(\frac{f_q}{\|f\|_{v^{\alpha}}})(x) - M_{p:q}(\frac{f_q}{\|f\|_{v^{\alpha}}})])}{v(x)^{\alpha}v^{\alpha}}
\Big]\Big).
$$
Then applying Lemma \ref{lem:vnorm_dob_backward}, we have the upper-bound
$$
\eta_p(Q_{p,n}(1)[M_{p:q}(f_q)(x) - M_{p:q}(f_q)]) \leq  c\|f\|_{v^{\alpha}} \rho^{(p-q-1)}
\eta_p(Q_{p,n}(1)v^{\alpha})v(x)^{\alpha}.
$$
Thus \eqref{eq:backward_master_ineq} is upper-bounded by
$$
c \|f\|_{v^{\alpha}} h_{p,n}(x)\eta_p(h_{p,n}v^{\alpha})v(x)^{\alpha}.
$$
Then we have 
\begin{eqnarray*}
c \|f\|_{v^{\alpha}} h_{p,n}(x)\eta_p(h_{p,n}v^{\alpha})v(x)^{\alpha}
& \leq & c \|f\|_{v^{\alpha}} [\sup_{n\geq 1}\sup_{0\leq p\leq n}\|h_{p,n}\|_{v^{\alpha}}]^2
v(x)^{\alpha}\eta_p(v^{2\alpha})v(x)^{\alpha}  \\ & \leq &
c \|f\|_{v^{\alpha}} [\sup_{n\geq 1}\sup_{0\leq p\leq n}\|h_{p,n}\|_{v^{\alpha}}]^2\sup_{p\geq 0}\|\eta_p\|_{v^{2\alpha}}
v(x)^{2\alpha}.
\end{eqnarray*}
By \cite[Propositions 1,2]{whiteley} $[\sup_{n\geq 1}\sup_{0\leq p\leq n}\|h_{p,n}\|_{v^{\alpha}}]^2\sup_{p\geq 0}\|\eta_p\|_{v^{2\alpha}}<+\infty$
and we conclude that
$$
\frac{h_{p,n}(x)}{\eta_p(Q_{p,n}(1))} \sum_{q=0}^{p-1}\eta_p(Q_{p,n}(1)[M_{p:q}(f_q)(x) - M_{p:q}(f_q)])
\leq c \|f\|_{v^{\alpha}} v(x)^{2\alpha}
$$
as was to be proven.
\end{proof}

\subsection{Technical Results}

\subsubsection{Forward Part}

\begin{lem}\label{lem:easy_part_forward}
Assume (A\ref{hyp:1}-\ref{hyp:3}) and (A\ref{hyp:2}-\ref{hyp:5}). Then for any $\alpha\in (0,1/2)$ there exist a $c<+\infty$ 
depending only in the constants in (A\ref{hyp:1}), (A\ref{hyp:6}-\ref{hyp:5}), such that for any
$n\geq 1$, $0\leq p < q <n$, $x\in\mathsf{X}$:
$$
h_{p,n}(x) \frac{Q_{p,q}(v^{\alpha} Q_{q,n}(1))(x)}{Q_{p,q}(Q_{q,n}(1))(x)} \frac{\eta_p[Q_{p,q}(v^{\alpha} Q_{q,n}(1))]}{\eta_p[Q_{p,q}(Q_{q,n}(1))]}
\leq c v(x)^{3\alpha}.
$$
\end{lem}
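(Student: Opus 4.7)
The plan is to decompose the left-hand side into three factors each controlled uniformly in $n,p,q$ by Whiteley-type bounds already invoked in Proposition~\ref{prop:forward}. First, using the semigroup identity $Q_{p,q}(Q_{q,n}(1))=Q_{p,n}(1)$ together with the pointwise factorisation $Q_{q,n}(1)(y)=\eta_q(Q_{q,n}(1))\,h_{q,n}(y)$, the normalising constant cancels in each quotient, yielding
$$
\frac{Q_{p,q}(v^{\alpha}Q_{q,n}(1))(x)}{Q_{p,q}(Q_{q,n}(1))(x)}=\frac{Q_{p,q}(v^{\alpha}h_{q,n})(x)}{Q_{p,q}(h_{q,n})(x)},
$$
and, using the elementary identity $\eta_p(Q_{p,q}(f))=\eta_q(f)\,\gamma_q(1)/\gamma_p(1)$,
$$
\frac{\eta_p[Q_{p,q}(v^{\alpha}Q_{q,n}(1))]}{\eta_p[Q_{p,q}(Q_{q,n}(1))]}=\frac{\eta_q(v^{\alpha}Q_{q,n}(1))}{\eta_q(Q_{q,n}(1))}=\eta_q(v^{\alpha}h_{q,n}).
$$

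I would then bound each of the three factors uniformly in $p,q,n$. The outer factor satisfies $h_{p,n}(x)\leq c\,v(x)^{\alpha}$ via the uniform bound $\sup_{n,p}\|h_{p,n}\|_{v^{\alpha}}<\infty$ from \cite[Propositions~1, 2 and Lemma~3]{whiteley}. The $\eta_q$-factor obeys $\eta_q(v^{\alpha}h_{q,n})\leq c\,\eta_q(v^{2\alpha})\leq c$, using the same uniform bound on $h_{q,n}$ together with $\sup_q\eta_q(v^{s})<\infty$ for $s<1$ from \cite[Proposition~1]{whiteley}, which applies since $2\alpha<2/3<1$. The middle quotient equals $\tilde{Q}_{p,q}(v^{\alpha})(x)$, where
$$
\tilde{Q}_{p,q}(x,dy):=\frac{Q_{p,q}(x,dy)\,h_{q,n}(y)}{Q_{p,q}(h_{q,n})(x)}
$$
is the Doob $h$-transform of $Q_{p,q}$ associated with the harmonic function $Q_{q,n}(1)$. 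I would argue that $\tilde Q$ inherits a multiplicative drift from (A\ref{hyp:1}) with constants independent of $p,q,n$: since $h_{q,n}$ is squeezed between constant multiples of $v^{-\alpha}$ and $v^{\alpha}$ (a two-sided version of \cite[Proposition~2]{whiteley}) and $\alpha<1/3$, the twist only perturbs the drift by a factor of order $v^{2\alpha}$, which is absorbed by the $v^{-\delta}$ contraction in (A\ref{hyp:1}); iterating the perturbed drift and applying Jensen's inequality ($\alpha<1$) then gives $\tilde Q_{p,q}(v^{\alpha})(x)\leq c\,v(x)^{2\alpha}$ uniformly in $p\leq q$.

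Multiplying the three bounds produces
$$
h_{p,n}(x)\,\tilde{Q}_{p,q}(v^{\alpha})(x)\,\eta_q(v^{\alpha}h_{q,n})\leq c\,v(x)^{\alpha}\cdot v(x)^{2\alpha}\cdot 1=c\,v(x)^{3\alpha},
$$
which is the stated inequality.

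The main obstacle is the uniform-in-$(p,q,n)$ control of the twisted kernel $\tilde{Q}_{p,q}$, i.e.\ verifying that twisting $Q_{p,q}$ by $Q_{q,n}(1)$ leaves the multiplicative drift of (A\ref{hyp:1}) essentially intact. This argument parallels those in the proofs of \cite[Theorem~1]{whiteley} and \cite{klep}, both of which are already invoked inside Proposition~\ref{prop:forward}; the hypothesis $\alpha\in(0,1/3)$ provides precisely the slack needed for the $v^{2\alpha}$ perturbation from the two-sided bounds on $h_{q,n}$ to be absorbed by the drift contraction, and the bounds produced depend only on the constants appearing in (A\ref{hyp:1}), (A\ref{hyp:6})--(A\ref{hyp:5}), as required.
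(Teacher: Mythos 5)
Your decomposition of the left-hand side into three factors is essentially the same as the paper's, and two of the three bounds you write ($h_{p,n}(x)\le c\,v(x)^{\alpha}$ and $\eta_q(v^{\alpha}h_{q,n})\le c\,\eta_q(v^{2\alpha})\le c$) are exactly the ones the paper uses. The step that does not go through is your treatment of the middle factor via the Doob $h$-transform $\tilde Q_{p,q}$. You need two things there, and I don't believe either is available in the form you invoke. First, you appeal to ``a two-sided version'' of Whiteley's Proposition~2, i.e.\ a uniform lower bound $h_{q,n}(x)\gtrsim v(x)^{-\alpha}$ on all of $\mathsf X$. Whiteley's paper provides $\sup_{n}\sup_{q}\|h_{q,n}\|_{v^\alpha}<\infty$ (an upper bound) and, separately, $\inf_{n,q}\inf_{x\in C_d}h_{q,n}(x)>0$ on the sub-level set $C_d$ (his Lemma~10), but not a global pointwise lower bound of the shape $v^{-\alpha}$; nothing in (A1)--(A6) rules out $Q_{q,n}(1)(x)$ decaying much faster than any power of $v(x)^{-1}$ off $C_d$. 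Second, even granting a $v^{\pm\alpha}$ sandwich, the claim that the $v^{2\alpha}$ perturbation is ``absorbed by the $v^{-\delta}$ contraction'' requires $2\alpha<\delta$, which is not implied by $\alpha\in(0,1/3)$ since $\delta$ can be arbitrarily small; your conclusion $\tilde Q_{p,q}(v^\alpha)(x)\le c\,v(x)^{2\alpha}$ (with $c$ independent of $x$) would moreover force $1/h_{p,n}$ to be bounded, which is false.

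The paper avoids both problems by normalising the wrong way, so to speak: it works with the genuinely Markov kernel $T_{p,q}(x,dy)=Q_{p,q}(x,dy)/Q_{p,q}(1)(x)$ rather than your $h$-transform, exploits the exact algebraic identities $h_{p,n}(x)/T_{p,q}(h_{q,n})(x)=h_{p,q}(x)$ and $h_{p,q}(x)T_{p,q}(h_{q,n})(x)=h_{p,n}(x)$ (so the troublesome denominator $\eta_p[h_{p,q}T_{p,q}(h_{q,n})]$ equals $\eta_p(h_{p,n})$, which is lower-bounded using $\eta_p(C_d)$ and $\inf_{C_d}h_{p,n}$ rather than a global lower bound), and applies the precomputed estimate from the derivation around equation~(61) of Whiteley's Theorem~1, which gives exactly $h_{p,q}(x)\,T_{p,q}(v^{2\alpha})(x)\le c\,v(x)^{2\alpha}$ without ever needing $h_{q,n}$ to be bounded below. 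If you recast your middle factor via $Q_{p,q}(h_{q,n})(x)=h_{p,n}(x)\,\eta_p(Q_{p,q}(1))$ you will see that what you can actually prove is $\tilde Q_{p,q}(v^\alpha)(x)\le c\,v(x)^{2\alpha}/h_{p,n}(x)$, which, recombined with the outer $h_{p,n}(x)$, recovers the stated bound; it is the stand-alone control of $\tilde Q_{p,q}(v^\alpha)$ that cannot be had.
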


\begin{proof}
Note that throughout $c$ denotes a generic finite constant that may depend upon $\alpha$, but whose value may change upon each appearance.
Define the Markov semi-group $T_{p,q}(x,dy) = Q_{p,q}(x,dy)/Q_{p,q}(1)(x)$. Then we have
\begin{equation}
h_{p,n}(x) \frac{Q_{p,q}(v^{\alpha} Q_{q,n}(1))(x)}{Q_{p,q}(Q_{q,n}(1))(x)} \frac{\eta_p[Q_{p,q}(v^{\alpha} Q_{q,n}(1))]}{\eta_p[Q_{p,q}(Q_{q,n}(1))]} = 
h_{p,n}(x) \frac{T_{p,q}(v^{\alpha} h_{q,n})(x)}{T_{p,q}(h_{q,n})(x)}
\frac{\eta_p[h_{p,q}T_{p,q}(v^{\alpha} h_{q,n})]}{\eta_p[h_{p,q}T_{p,q}(h_{q,n})]}.
\label{eq:decomp_forward_easy}
\end{equation}

We will consider the R.H.S.~of \eqref{eq:decomp_forward_easy}; first the term:
$$
\frac{h_{p,n}(x)}{T_{p,q}(h_{q,n})(x)} = \frac{Q_{p,n}(1)(x)}{\prod_{s=p}^{n-1} \lambda_s} \frac{Q_{p,q}(1)(x) \prod_{s=q}^{n-1} \lambda_s}{Q_{p,n}(1)(x)}
$$
where $\lambda_s = \eta_s(G_s)$ and we have used, recursively, \cite[Lemma 1]{whiteley}. Then by cancelling, it clearly follows that
$$
\frac{h_{p,n}(x)}{T_{p,q}(h_{q,n})(x)} =h_{p,q}(x).
$$
Hence, combining our calculations together and returning to \eqref{eq:decomp_forward_easy},
we have established that
\begin{equation}
h_{p,n}(x) \frac{Q_{p,q}(v^{\alpha} Q_{q,n}(1))(x)}{Q_{p,q}(Q_{q,n}(1))(x)} \frac{\eta_p[Q_{p,q}(v^{\alpha} Q_{q,n}(1))]}{\eta_p[Q_{p,q}(Q_{q,n}(1))]}
=  h_{p,q}(x) T_{p,q}(v^{\alpha} h_{q,n})(x)\frac{\eta_p[h_{p,q}T_{p,q}(v^{\alpha} h_{q,n})]}{\eta_p[h_{p,q}T_{p,q}(h_{q,n})]}.
\label{eq:decomp_forward_easy1}
\end{equation}

We now focus on the term $1/\eta_p[h_{p,q}T_{p,q}(h_{q,n})]$ in \eqref{eq:decomp_forward_easy1}. We note that for any $x\in\mathsf{X}$:
$$
h_{p,q}(x)T_{p,q}(h_{q,n})(x) = \frac{Q_{p,q}(1)(x)}{\prod_{s=p}^{q-1}\lambda_s}\frac{Q_{p,n}(1)(x)}{Q_{p,q}(1)(x)\prod_{s=q}^{n-1}\lambda_s}
= h_{p,n}(x).
$$
By Lemma 10 of \cite{whiteley} for any arbitrary $d\in[\underline{d},\infty)$, $\inf_{n\geq 1}\inf_{0\leq p \leq n}\inf_{x_{\in C_d}} h_{p,n}(x) > 0$ and so for any $d$ as stated and by using the above calculation:
$$
\eta_p[h_{p,q}T_{p,q}(h_{q,n})] \geq \eta_p[\mathbb{I}_{C_d}h_{p,n}] \geq \eta_p(C_d)\bigg[\inf_{n\geq 1}\inf_{0\leq p \leq n}\inf_{x_{\in C_d}} h_{p,n}(x)\bigg].
$$
Now by using the proof of Lemma 8 of \cite{whiteley}, page 2527, we have for $d$ large enough, that there is a finite $c>0$ such that
$$
\inf_{p\geq 0} \eta_p(C_d)\bigg[\inf_{n\geq 1}\inf_{0\leq p \leq n}\inf_{x_{\in C_d}} h_{p,n}(x)\bigg] \geq c.
$$
Thus returning to \eqref{eq:decomp_forward_easy1}, we have
\begin{equation}
h_{p,n}(x) \frac{Q_{p,q}(v^{\alpha} Q_{q,n}(1))(x)}{Q_{p,q}(Q_{q,n}(1))(x)} \frac{\eta_p[Q_{p,q}(v^{\alpha} Q_{q,n}(1))]}{\eta_p[Q_{p,q}(Q_{q,n}(1))]}
\leq  c h_{p,q}(x) T_{p,q}(v^{\alpha} h_{q,n})(x) \eta_p[h_{p,q}T_{p,q}(v^{\alpha} h_{q,n})].
\label{eq:decomp_forward_easy2}
\end{equation}

Now using the above arguments, we have $\sup_{n\geq 1}\sup_{1\leq q \leq n}\|h_{q,n}\|_{v^{\alpha}} <+\infty$, so we have
for any $x\in\mathsf{X}$
$$
T_{p,q}(v^{\alpha} h_{q,n})(x) \leq c T_{p,q}(v^{2\alpha})(x)
$$
where $c$ does not depend upon $p,q,n$. Then using the calculations of \cite[Theorem 1]{whiteley}, which arrive at the equation (61), page 2532, one
has
\begin{equation}
T_{p,q}(v^{\alpha} h_{q,n})(x) \leq c \frac{v_{p,q,2\alpha}(x)}{\|h_{p,q}\|_{v^{2\alpha}}}
\label{eq:neat_trick}
\end{equation}
where $v_{p,q,2\alpha}(x) = v(x)^{2\alpha}\|h_{p,q}\|_{v^{2\alpha}}/h_{p,q}(x)$ and we are invoking Lemma 3 of \cite{whiteley}. Hence, returning to
\eqref{eq:decomp_forward_easy2}, we have
\begin{equation}
h_{p,n}(x) \frac{Q_{p,q}(v^{\alpha} Q_{q,n}(1))(x)}{Q_{p,q}(Q_{q,n}(1))(x)} \frac{\eta_p[Q_{p,q}(v^{\alpha} Q_{q,n}(1))]}{\eta_p[Q_{p,q}(Q_{q,n}(1))]}
\leq 
c v(x)^{3\alpha}
\eta_p[h_{p,q}T_{p,q}(v^{\alpha} h_{q,n})]
\label{eq:decomp_forward_easy3}
\end{equation}

We now turn to $\eta_p[h_{p,q}T_{p,q}(v^{\alpha} h_{q,n})]$ on the R.H.S.~of \eqref{eq:decomp_forward_easy3}. By using \eqref{eq:neat_trick},
we have
$$
\eta_p[h_{p,q}T_{p,q}(v^{\alpha} h_{q,n})] \leq c \eta_p(v^{2\alpha})
$$
where $c$ depends upon $\alpha$ only. Using Proposition 1 of \cite{whiteley} (noting again Lemma 3 of \cite{whiteley} and that $\alpha\in(0,1/2)$), we can thus conclude that:
$$
h_{p,n}(x) \frac{Q_{p,q}(v^{\alpha} Q_{q,n}(1))(x)}{Q_{p,q}(Q_{q,n}(1))(x)} \frac{\eta_p[Q_{p,q}(v^{\alpha} Q_{q,n}(1))]}{\eta_p[Q_{p,q}(Q_{q,n}(1))]}
\leq c v(x)^{3\alpha}
$$
which completes the proof.
\end{proof}

\begin{lem}\label{lem:forward_hard_part}
Assume (A\ref{hyp:1}-\ref{hyp:3}) and (A\ref{hyp:2}-\ref{hyp:5}). Then there exist a $d\in[\underline{d},\infty)$ such that 
for any $\alpha\in (0,1/3)$, $\beta\in(0,1)$  there exist a $0<c(d,\alpha,\beta)<+\infty$ such that for any, $n\geq 1$, $0\leq p < q <n$, $x\in\mathsf{X}$:
$$
\frac{h_{p,n}(x)\overline{\mathbb{E}}_{\delta_x\otimes\eta_p}\Big[\Big\{\prod_{s=p}^{q-1} \overline{G}_s(\overline{X}_s)\Big\}
\overline{v}(\overline{X}_q)^{\alpha} \overline{Q}_{q,n}(1)(\overline{X}_q)
\mathbb{I}_{ \overline{M}_{p,q}^{d} <\beta(q-p) }
\Big] }{Q_{p,n}(1)(x)\eta_p(Q_{p,n}(1))} \leq 
$$
$$
 c(d,\alpha,\beta) \mu(v^{3\alpha})v(x)^{3\alpha} \exp\{-(q-p)c(d,\alpha,\beta)]\}.
$$
\end{lem}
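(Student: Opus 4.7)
The plan is to exploit the multiplicative drift (A\ref{hyp:1}) via a Chernoff-type bound together with the tensor-product structure of the joint chain $\{\overline{X}_k\}$. For any $\theta>0$ we have the deterministic bound $\mathbb{I}_{\{\overline{M}_{p,q}^{d}<\beta(q-p)\}}\leq e^{\theta\beta(q-p)}e^{-\theta\overline{M}_{p,q}^{d}}$. Setting $A_{k}^X=\mathbb{I}_{C_{d}}(X_{k})\mathbb{I}_{C_{d}}(X_{k+1})$ and analogously $A_{k}^Y$, the pairwise identity $A_{k}^X A_{k}^Y\geq A_{k}^X+A_{k}^Y-1$ gives $\overline{M}_{p,q}^{d}\geq M_{p,q}^{X,d}+M_{p,q}^{Y,d}-(q-p)$, hence $\mathbb{I}_{\{\overline{M}_{p,q}^{d}<\beta(q-p)\}}\leq e^{\theta(\beta+1)(q-p)}e^{-\theta M_{p,q}^{X,d}}e^{-\theta M_{p,q}^{Y,d}}$. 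Since under $\overline{\mathbb{E}}_{\delta_x\otimes\eta_p}$ the coordinates are independent with kernel $H_k$ and the remaining integrand factorizes in the $X_{p:q}$ and $Y_{p:q}$ variables, one obtains
$$
\overline{\mathbb{E}}_{\delta_x\otimes\eta_p}\bigl[\cdots\mathbb{I}_{\{\overline{M}_{p,q}^{d}<\beta(q-p)\}}\bigr]\leq e^{\theta(\beta+1)(q-p)}\,J(x)\,\eta_{p}(J),
$$
where $J(y):=\mathbb{E}_{y}\bigl[\prod_{s=p}^{q-1}G_{s}(X_{s})e^{-\theta A_{s}^X}\,v(X_{q})^{\alpha}Q_{q,n}(1)(X_{q})\bigr]$ is a twisted single-chain Feynman--Kac expectation.

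To bound $J$, I would iterate (A\ref{hyp:1}) backwards from $s=q-1$ down to $s=p$, treating $J$ as a Feynman--Kac expectation with the modified pair potential $\tilde{G}_s(X_s,X_{s+1})=G_s(X_s)e^{-\theta A_s^X}$. Jensen applied to (A\ref{hyp:1}) yields the $\alpha$-power drift $Q_s(v^{\alpha})(x)\leq c\,v(x)^{(1-\delta)\alpha}G_{s-1}(x)^{1-\alpha}e^{\alpha b_d\mathbb{I}_{C_d}(x)}$; combined with $Q_{q,n}(1)\in\mathscr{L}_{v^{\alpha}}$ (a standard consequence of (A\ref{hyp:1}--\ref{hyp:5}) and \cite[Lemma~3]{whiteley}), this gives strict contraction in the unbounded regime, while the twisted factor $e^{-\theta A_s^X}$ inside $C_d$ absorbs the $e^{\alpha b_d}$ excursion whenever two consecutive visits to $C_d$ occur. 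Choosing $d$ large so that $e^{-\alpha\delta V(x)}\leq e^{-\kappa_0}$ on $C_d^c$ and then $\theta$ so that $e^{\alpha b_d-\theta}\leq e^{-\kappa_0}$, a straightforward induction produces $J(y)\leq c\,v(y)^{\alpha}Q_{p,n}(1)(y)\,e^{-\kappa(q-p)}$ for some $\kappa=\kappa(d,\alpha,\theta)>0$. Inserting this together with $h_{p,n}(x)Q_{p,n}(1)(x)^{-1}=\eta_p(Q_{p,n}(1))^{-1}$ reduces the LHS, up to the factor $e^{(\theta(\beta+1)-2\kappa)(q-p)}$, to the indicator-free quantity already controlled by $c v(x)^{3\alpha}$ in Lemma \ref{lem:easy_part_forward}; a final integration against $\eta_p$ produces the $\mu(v^{3\alpha})$ factor via \cite[Proposition~1]{whiteley}. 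Taking $\theta$ small enough that $\theta(\beta+1)<2\kappa$ delivers the claimed exponential rate.

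The main obstacle is the joint calibration of $d$, $\theta$, and $\kappa$: enlarging $d$ sharpens the drift outside $C_d$ but inflates $b_d$, forcing $\theta$ to grow, while $\theta(\beta+1)$ must nevertheless remain strictly below $2\kappa$. A secondary subtlety is that the twisted potential $e^{-\theta A_s^X}$ couples consecutive states, so the one-step drift from (A\ref{hyp:1}) has to be combined with a two-step recursion before being iterated; preserving the polynomial weight $v(x)^{3\alpha}$ through these Hölder splits (of $v^{\alpha}\cdot Q_{q,n}(1)\cdot e^{-\theta A}$) is where the restriction $\alpha\in(0,1/3)$ enters.
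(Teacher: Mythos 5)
Your Chernoff-plus-decoupling route is genuinely different from the paper's: the paper never introduces a twisted potential, but instead rewrites the denominators as $Q_{p,q}(h_{q,n})(x)$ and $\eta_p(Q_{p,q}(h_{q,n}))$, bounds $h_{p,n}(x)/Q_{p,q}(h_{q,n})(x) = \prod_{s=p}^{q-1}\lambda_s^{-1}\le\underline{\lambda}^{-(q-p)}$, bounds $\eta_p(Q_{p,q}(h_{q,n}))$ from below by $(\epsilon_r^-\nu_r(C_r))^{q-p}$ times fixed constants (Lemmas~8,10 of \cite{whiteley}), and then quotes the ready-made occupation-time estimate from the proof of \cite[Theorem~1]{whiteley} for the numerator. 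The contest is then between the polynomial-in-$d$ decay $d\delta(1-\beta)/2$ from that estimate and the $d$-independent constants $\log\underline\lambda+\log(\epsilon_r^-\nu_r(C_r))$, won by taking $d$ large with $r$ held fixed.

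There is a genuine gap in your proposal, and you have actually named it: the ``joint calibration of $d$, $\theta$, and $\kappa$'' does not close. In your scheme the contraction rate $\kappa_0$ outside $C_d$ is of order $\alpha\delta d$, and you need $\theta\gtrsim \alpha b_d$ to absorb the in-$C_d$ excursion; but (A\ref{hyp:1}) only guarantees $b_d<+\infty$ for each $d$, with no control of its growth in $d$, so $\theta(\beta+1)<2\kappa\lesssim \alpha\delta d$ has no reason to be achievable. The device that fixes this --- and is the essential ingredient of \cite{whiteley}'s Theorem~1 proof that the paper imports wholesale --- is to apply the drift inequality with the \emph{fixed} small set $C_{\underline d}$ (hence the fixed constant $b_{\underline d}$) while keeping $d$ free to enlarge \emph{only the level set used to count visits} in $\overline M_{p,q}^d$. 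This is why, in the analogous backward Lemma~\ref{lem:vnorm_dob_backward}, the exponent reads $[\delta d(1-\beta)/2 - 2b_{\underline d}]$ with $b_{\underline d}$ fixed and $d$ free. Your argument conflates the two levels, so the calibration cannot be completed as written. A secondary, unresolved issue is that you claim $J(y)\le c\,v(y)^\alpha Q_{p,n}(1)(y)e^{-\kappa(q-p)}$ from a ``straightforward induction'' on a pair-potential, but the exponential decay here must simultaneously dominate the $(\underline\lambda\,\epsilon_r^-\nu_r(C_r))^{-(q-p)}$ blow-up hidden in $1/\eta_p(Q_{p,q}(h_{q,n}))$; this competing exponential is never isolated in your argument, whereas controlling it is the crux of the paper's computation.
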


\begin{proof}
Throughout $c$ denotes a generic finite and positive constant that depends upon $\alpha,\beta,d$, but whose value may change upon each appearance.
The dependences of $c$ are omitted in the proof to simplity the notations.

We can rewrite
$$
\frac{h_{p,n}(x)\overline{\mathbb{E}}_{\delta_x\otimes\eta_p}\Big[\Big\{\prod_{s=p}^{q-1} \overline{G}_s(\overline{X}_s)\Big\}
\overline{v}(\overline{X}_q)^{\alpha} \overline{Q}_{q,n}(1)(\overline{X}_q)
\mathbb{I}_{ \overline{M}_{p,q}^{d} <\beta(q-p) }
\Big] }{Q_{p,n}(1)(x)\eta_p(Q_{p,n}(1))}
=
$$
\begin{equation}
\frac{h_{p,n}(x)\overline{\mathbb{E}}_{\delta_x\otimes\eta_p}\Big[\Big\{\prod_{s=p}^{q-1} \overline{G}_s(\overline{X}_s)\Big\}
\overline{v}(\overline{X}_q)^{\alpha} \overline{h}_{q,n}(\overline{X}_q)
\mathbb{I}_{ \overline{M}_{p,q}^{d} <\beta(q-p) }
\Big] }{Q_{p,q}(h_{q,n})(x)\eta_p(Q_{p,q}(h_{q,n}))}.
\label{eq:first_decomp}
\end{equation}

Now consider the term:
$
\frac{h_{p,n}(x)}{Q_{p,q}(h_{q,n})(x)}
$
in \eqref{eq:first_decomp}. We have
$$
\frac{h_{p,n}(x)}{Q_{p,q}(h_{q,n})(x)} = \frac{Q_{p,n}(1)(x)\prod_{s=q}^{n-1}\lambda_s}{\prod_{s=p}^{n-1}\lambda_s Q_{p,n}(1)(x)}= \frac{1}{\prod_{s=p}^{q-1}\lambda_s}.
$$
Now, using Propositions 1 and 2 of \cite{whiteley}, $\underline{\lambda} := \inf_{s\geq 0}\lambda_s >0$ and thus by the above calculation it follows that
$$
\frac{h_{p,n}(x)}{Q_{p,q}(h_{q,n})(x)} \leq \frac{1}{\underline{\lambda}^{q-p}}.
$$
This leaves us with 
$$
\frac{h_{p,n}(x)\overline{\mathbb{E}}_{\delta_x\otimes\eta_p}\Big[\Big\{\prod_{s=p}^{q-1} \overline{G}_s(\overline{X}_s)\Big\}
\overline{v}(\overline{X}_q)^{\alpha} \overline{Q}_{q,n}(1)(\overline{X}_q)
\mathbb{I}_{ \overline{M}_{p,q}^{d} <\beta(q-p) }
\Big] }{Q_{p,n}(1)(x)\eta_p(Q_{p,n}(1))}
=
$$
\begin{equation}
\frac{\overline{\mathbb{E}}_{\delta_x\otimes\eta_p}\Big[\Big\{\prod_{s=p}^{q-1} \overline{G}_s(\overline{X}_s)\Big\}
\overline{v}(\overline{X}_q)^{\alpha} \overline{h}_{q,n}(\overline{X}_q)
\mathbb{I}_{ \overline{M}_{p,q}^{d} <\beta(q-p) }
\Big] }{\underline{\lambda}^{q-p}\eta_p(Q_{p,q}(h_{q,n}))}.
\label{eq:second_decomp}
\end{equation}

The next term we consider on the R.H.S.~ of \eqref{eq:second_decomp} is $1/\eta_p(Q_{p,q}(h_{q,n}))$. Pick a $r\in[\underline{d},d)$ fixed. Then we have by repeatedly applying (A\ref{hyp:2})
\begin{eqnarray*}
\eta_p(Q_{p,q}(h_{q,n})) & \geq & \eta_p(Q_{p,q}(C_r)) \inf_{n\geq 1}\inf_{0\leq q \leq n}\inf_{x_{\in C_r}} h_{q,n}(x) \\
& \geq & \eta_p(C_r) (\epsilon_r^{-}\nu_r(C_r))^{q-p} \inf_{n\geq 1}\inf_{0\leq q \leq n}\inf_{x\in C_r} h_{q,n}(x).
\end{eqnarray*}
Now by Lemma 10 of \cite{whiteley} $\inf_{n\geq 1}\inf_{0\leq q \leq n}\inf_{x_{\in C_r}} h_{q,n}(x)>0$ and for $r$ and hence $d$ large enough
$\inf_{p\geq 0}\eta_p(C_r)>0$ by the proof of Lemma 8 page 2527 of \cite{whiteley}. Now fix $r$ from here-in. Thus we have shown that for $r, d$ large enough:
$$
\frac{h_{p,n}(x)\overline{\mathbb{E}}_{\delta_x\otimes\eta_p}\Big[\Big\{\prod_{s=p}^{q-1} \overline{G}_s(\overline{X}_s)\Big\}
\overline{v}(\overline{X}_q)^{\alpha} \overline{Q}_{q,n}(1)(\overline{X}_q)
\mathbb{I}_{ \overline{M}_{p,q}^{d} <\beta(q-p) }
\Big] }{Q_{p,n}(1)(x)\eta_p(Q_{p,n}(1))}
\leq
$$
\begin{equation}
c\frac{\overline{\mathbb{E}}_{\delta_x\otimes\eta_p}\Big[\Big\{\prod_{s=p}^{q-1} \overline{G}_s(\overline{X}_s)\Big\}
\overline{v}(\overline{X}_q)^{\alpha} \overline{h}_{q,n}(\overline{X}_q)
\mathbb{I}_{ \overline{M}_{p,q}^{d} <\beta(q-p) }
\Big] }{(\underline{\lambda}\epsilon_r^{-}\nu_r(C_r))^{q-p}}.
\label{eq:third_decomp}
\end{equation}

Now to complete the proof, we note that as $h_{q,n}\in\mathscr{L}_{v^{\alpha}}$ and $\sup_{n\geq 1}\sup_{0\leq q \leq n}\|h_{q,n}\|_{v^{\alpha}}<+\infty$, by Propositions 1, 2 and Lemma 3 of \cite{whiteley}, the upper-bound of 
the R.H.S.~of \eqref{eq:third_decomp}:
$$
c\frac{\overline{\mathbb{E}}_{\delta_x\otimes\eta_p}\Big[\Big\{\prod_{s=p}^{q-1} \overline{G}_s(\overline{X}_s)\Big\}
\overline{v}(\overline{X}_q)^{3\alpha} 
\mathbb{I}_{ \overline{M}_{p,q}^{d} <\beta(q-p) }
\Big] }{(\underline{\lambda}\epsilon_r^{-}\nu_r(C_r))^{q-p}}.
$$
Then by the proof of Theorem 1 of \cite{whiteley}, pages 2533-2534 we note
$$
\overline{\mathbb{E}}_{\delta_x\otimes\eta_p}\Big[\Big\{\prod_{s=p}^{q-1} \overline{G}_s(\overline{X}_s)\Big\}
\overline{v}(\overline{X}_q)^{3\alpha} 
\mathbb{I}_{ \overline{M}_{p,q}^{d} <\beta(q-p) }
\Big]
\leq c \mu(v^{3\alpha}) v(x)^{3\alpha} \exp\{-d\delta(q-p)(1-\beta)/2 + 3d\delta/2\}.
$$
Hence we have proved that for $r,d$ large enough
$$
\frac{h_{p,n}(x)\overline{\mathbb{E}}_{\delta_x\otimes\eta_p}\Big[\Big\{\prod_{s=p}^{q-1} \overline{G}_s(\overline{X}_s)\Big\}
\overline{v}(\overline{X}_q)^{\alpha} \overline{Q}_{q,n}(1)(\overline{X}_q)
\mathbb{I}_{ \overline{M}_{p,q}^{d} <\beta(q-p) }
\Big] }{Q_{p,n}(1)(x)\eta_p(Q_{p,n}(1))}
\leq
$$
$$
c \mu(v^{3\alpha}) v(x)^{3\alpha} \exp\{-(q-p)[d\delta (1-\beta)/2 + \log(\underline{\lambda}) + \log(\epsilon_r^{-1}\mu_r(C_r)))] + 3d\delta/2\}.
$$
On noting that $r$ is fixed, one can increase $d$ to ensure that the result holds true.
\end{proof}

\subsubsection{Backward Part}

\begin{lem}\label{lem:vnorm_dob_backward}
Assume (A\ref{hyp:1}-\ref{hyp:5}). Then for any $\alpha\in(0,1/2)$,
$p\geq 1$, $q\in\{0,\dots,p-1\}$  there exist a $c<+\infty$
which depends only upon the constants in (A\ref{hyp:1}),
(A\ref{hyp:6}-\ref{hyp:5}) such that
$$
\sup_{(x,z)\in\overline{\mathsf{X}}}\sup_{|f|\leq v^{\alpha}}
\frac{|M_{p:q}(f)(x) - M_{p:q}(f)(z)|}{\overline{v}(x,z)^{\alpha}}
\leq c\rho^{(p-q-1)}.
$$
\end{lem}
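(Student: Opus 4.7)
My plan is to establish this $v^{\alpha}$-weighted contraction for the backward composition $M_{p:q}=M_{p,\eta_{p-1}}\cdots M_{q+1,\eta_q}$ by mimicking the drift-plus-local-Doeblin argument used for the forward semigroup in Theorem 1 of \cite{whiteley}, transplanted to the backward kernel. Three ingredients will be combined: (i) a one-step scale-preservation bound showing $M_{q,\eta_{q-1}}(v^{\alpha})(y)\leq c\,v(y)^{\alpha}$ uniformly in $q$; (ii) a local Doeblin minorization on $\bar{C}_d=C_d\times C_d$ for the tensorised backward kernel; (iii) the multiplicative drift (A\ref{hyp:1}) controlling excursions away from $C_d$.

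For (i), assumption (A\ref{hyp:6}) gives $G_{q-1}(x)H_q(x,y)/\eta_{q-1}(G_{q-1}H_q(\cdot,y))\leq c\,v(x)^{\alpha}v(y)^{\alpha}$, hence
$$
M_{q,\eta_{q-1}}(v^{\alpha})(y)\leq c\,v(y)^{\alpha}\,\eta_{q-1}(v^{2\alpha}),
$$
and Proposition 1 of \cite{whiteley} supplies $\sup_{q}\eta_{q-1}(v^{2\alpha})<+\infty$. Iterating yields a uniform $v^{\alpha}$-bound for $M_{p:q}$, so that all tail expectations are well defined on the $v^{\alpha}$ scale. For (ii), on $C_d\times C_d$ the one-step backward density can be lower-bounded using (A\ref{hyp:2}) for the numerator ($G_{q-1}(x)H_q(x,y)\geq\tilde{\epsilon}_d^-$) and (A\ref{hyp:4}) together with the uniform lower bound $\inf_{q}\eta_{q-1}(C_d)>0$ (from the proof of Lemma~8 of \cite{whiteley}) for the denominator, contributions from $C_d^c$ being absorbed via (A\ref{hyp:6}) and (A\ref{hyp:5}). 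This yields a Dobrushin coefficient $\rho_d:=1-(\tilde{\epsilon}_d^-/\tilde{\epsilon}_d^+)^2\in(0,1)$ for the one-step tensorised backward kernel restricted to $\bar{C}_d$.

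I would then couple two backward chains started at $x$ and $z$, analogous to $\{\overline{X}_k\}$ in the forward analysis, and obtain a representation of the form
$$
|M_{p:q}(f)(x)-M_{p:q}(f)(z)|\;\leq\;c\|f\|_{v^{\alpha}}\,\overline{\mathbb{E}}_{\delta_x\otimes\delta_z}\!\left[\overline{v}(\overline{X}_{p-q-1})^{\alpha}\,\rho_d^{\overline{M}_{p,q}^{d}}\right],
$$
then split on $\{\overline{M}_{p,q}^{d}\geq\beta(p-q-1)\}$ and its complement for some $\beta\in(0,1)$. On the first event one extracts $\rho_d^{\beta(p-q-1)}$ directly; on its complement, the multiplicative drift (A\ref{hyp:1}) applied exactly as on pp.~2533--2534 of \cite{whiteley} gives an exponential decay in $(p-q-1)$ penalising insufficient visits to $C_d$, with the $\overline{v}(x,z)^{\alpha}$ prefactor emerging from iterating the drift on $\overline{v}(\overline{X}_k)^{\alpha}$. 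Choosing $d$ sufficiently large so that $d\delta(1-\beta)/2$ exceeds the logarithmic losses then produces a common rate $\rho\in(0,1)$ with constant depending only on the constants in (A\ref{hyp:1}) and (A\ref{hyp:6}--\ref{hyp:5}).

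The main obstacle is producing the upper bound on $\eta_{q-1}(G_{q-1}H_q(\cdot,y))$ that is needed to justify the Dobrushin coefficient $\rho_d$ uniformly in $q\geq 1$ and $y\in C_d$; this is not immediate from (A\ref{hyp:6}) alone and must be extracted by combining (A\ref{hyp:4}), (A\ref{hyp:5}) on $C_d\times C_d$ with a contribution from $C_d^c$ controlled via (A\ref{hyp:6}). Once this ingredient is in hand, the remainder of the argument is a transcription of the forward-chain analysis of \cite{whiteley} to the backward setting, with $M_{p:q}$ playing the role of the forward semi-group and (A\ref{hyp:6}) replacing the forward change-of-measure identities.
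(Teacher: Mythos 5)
Your proposal works directly with the backward kernel $M_{q,\eta_{q-1}}$, attempting to verify a local Doeblin minorization and then invoke a drift-type penalty for the associated backward chain off $C_d\times C_d$. The paper instead proceeds via the reversal formula of \cite[Lemma 4.3]{dds1},
$$
M_{p:q}(f)(x) = \frac{\eta_q\big(f\, Q_{q,p-1}[Q_{p}(\cdot,x)]\big)}{\eta_q\big(Q_{q,p-1}[Q_{p}(\cdot,x)]\big)},
$$
which recasts the backward composition entirely in terms of \emph{forward} Feynman-Kac objects $Q_{q,p-1}$, so that the local Doeblin and multiplicative drift machinery of \cite{whiteley} (which is stated for the forward semigroup $Q_n$) applies verbatim. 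This is the missing idea in your argument.

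The concrete gap is step (iii). Assumption (A1) is a multiplicative drift for the forward kernel $Q_n$; it says nothing about the backward kernel $M_{q,\eta_{q-1}}$. Your one-step bound $M_{q,\eta_{q-1}}(v^{\alpha})(y)\leq c\,v(y)^{\alpha}$, while correct, is a mere stability estimate, not a contractive drift of the form $(1-\delta)v^{\alpha} + b\,\mathbb{I}_{C_d}$. When you get to the event $\{\overline M_{p,q}^d<\beta(q-p)\}$ and write ``the multiplicative drift (A1) applied exactly as on pp.~2533--2534 of \cite{whiteley} gives an exponential decay,'' you are invoking an argument that requires a contractive drift for the chain whose paths you are coupling --- but that chain, in your construction, is driven by the backward kernels, for which no such drift has been established or follows from the hypotheses. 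The exponential penalty for infrequent visits to $\overline C_d$ has no basis here. The paper circumvents this entirely by reversing first, after which the paths in question are forward $Q$-paths and (A1) is directly applicable. Your identified ``main obstacle'' (controlling $\eta_{q-1}(G_{q-1}H_q(\cdot,y))$) is a secondary, fixable issue; the drift transplant is the one that actually breaks.
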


\begin{proof}
We start by using Lemma 4.3 of \cite{dds1}, which provides the neat reversal formula:
\begin{equation}
M_{p:q}(f)(x) = \frac{\eta_q(f Q_{q,p-1}[(Q_{p}(\cdot,x)])}{\eta_q(Q_{q,p-1}[Q_{p}(\cdot,x)])} \quad \forall x\in\mathsf{X}
\label{eq:back_rep}
\end{equation}
where we use the abuse of notation $\mu Q_p(\cdot,x) = \int \mu(dy) G_{p-1}(y) H_p(y,x)$ for any $\sigma-$finite measure $\mu$.

We first focus on the case that  $q\in\{0,\dots,p-2\}$. We note that using a similar proof to \cite[Lemma 1]{whiteley} that for
any $\varphi:\mathsf{X}\rightarrow\mathbb{R}$ 
\begin{equation}
\eta_{q}(Q_{q,p-1}(\varphi)) = \Big(\prod_{s=q}^{p-2}\lambda_s\Big) \eta_{p-1}(\varphi)
\label{eq:eigen_fn_fn}.
\end{equation} 
Using the representation \eqref{eq:back_rep} and the identity \eqref{eq:eigen_fn_fn}, we have that
$$
\frac{M_{p:q}(f)(x) - M_{p:q}(f)(z)}{\overline{v}(x,z)^{\alpha}}
= 
$$
\begin{equation}
\frac{(\eta_q\otimes\eta_q)(
f\{Q_{q,p-1}[Q_{p}(\cdot,x)]Q_{q,p-1}[Q_{p}(\cdot,z)]-
Q_{q,p-1}[Q_{p}(\cdot,z)]Q_{q,p-1}[Q_{p}(\cdot,x)]
\})}{\Big(\prod_{s=q}^{p-2}\lambda_s\Big)^2
\eta_{p-1}[Q_{p}(\cdot,x)]\eta_{p-1}[Q_{p}(\cdot,z)]\overline{v}(x,z)^{\alpha}}
\label{eq:backward_master_eq}.
\end{equation}

Consider the argument of the function
that is operated on by $(\eta_q\otimes\eta_q)$, when excluding $f$ on the R.H.S.~of \eqref{eq:backward_master_eq}.
This can be written as
$$
(\delta_s\otimes\delta_t - \delta_t\otimes\delta_s)(\overline{Q}_{q,p-1}(Q_p(\cdot,x)\otimes Q_p(\cdot,z))).
$$
Then by (A\ref{hyp:6}) as $Q_p(y,x)/\eta_{p-1}[Q_{p}(\cdot,x)]\in\mathscr{L}_{\overline{v}^{\alpha}}$, and via
decompositions and calculations in \cite{douc} and \cite{klep} (see e.g.~the proof of Theorem 1 of \cite{whiteley})
$$
\frac{(\delta_s\otimes\delta_t - \delta_t\otimes\delta_s)(\overline{Q}_{q,p-1}(Q_p(\cdot,x)\otimes Q_p(\cdot,z)))}{\eta_{p-1}[Q_{p}(\cdot,x)]\eta_{p-1}[Q_{p}(\cdot,z)]}
\leq c (\delta_s\otimes\delta_t) \overline{R}_{q,p-1}(\overline{v}^{\alpha}) ]\overline{v}(x,z)^{\alpha}
$$
where $c$ depends on $\sup_{p\geq 1}\|Q_p/\eta_{p-1}[Q_{p}]\|_{\overline{v}^{\alpha}}$ and
$$
\overline{R}_{r}(\bar{x},d\bar{y})=\overline{Q}_{r}(\bar{x},d\bar{y}) - \mathbb{I}_{\overline{C}_d}(\bar{x})(\epsilon_d^{-})^2\nu_d\otimes\nu_d(d\bar{y})
$$ 
with $\bar{x}=(x_1,x_2)\in\overline{\mathsf{X}}$, $\bar{y}=(y_1,y_2)\in\overline{\mathsf{X}}$
and $\overline{R}_{q,p-1}=\overline{R}_{q+1}\dots \overline{R}_{p-1}$.
By the calculations of \cite[Theorem 1, pp.~2532-2534]{whiteley}, we have that
$$
(\delta_s\otimes\delta_t) \overline{R}_{q,p-1}(\overline{v}^{\alpha})
\leq c\rho_d^{\beta(p-q-1)}\overline{Q}_{q,p-1}(\overline{v}^{\alpha})(s,t) +
c\exp\Big\{-(p-q-1)\Big[\frac{\delta d(1-\beta)}{2}-2b_{\underline{d}}\Big] + \frac{3\delta d}{2}\Big\}\overline{v}(s,t)^{\alpha}
$$
where $c$ does not depend upon $d$,  $d\geq \underline{d}$, $\beta\in(0,1)$ are arbitrary and $\rho_d=(1-\Big(\frac{\epsilon_d^{-}}{\epsilon_d^{+}}\Big)^2)$
Thus returning to \eqref{eq:backward_master_eq}, we have established that
$$
\frac{M_{p:q}(f)(x) - M_{p:q}(f)(z)}{\overline{v}(x,z)^{\alpha}}
\leq c \Big(\prod_{s=q}^{p-2}\lambda_s\Big)^{-2}\times
$$
\begin{equation}
(\eta_q\otimes\eta_q)\Big(v^{\alpha}\Big\{
\rho_d^{\beta(p-q-1)}\overline{Q}_{q,p-1}(\overline{v}^{\alpha}) +
\exp\Big\{-(p-q-1)\Big[\frac{\delta d(1-\beta)}{2}-2b_{\underline{d}}\Big] + \frac{3\delta d}{2}\Big\}\overline{v}^{\alpha}\Big\}
\Big)\label{eq:backward_dob_bd1}
\end{equation}
We split the R.H.S.~of \eqref{eq:backward_dob_bd1} into the sum of two expressions:
\begin{equation}
c\Big(\prod_{s=q}^{p-2}\lambda_s\Big)^{-2}
(\eta_q\otimes\eta_q)\Big(v^{\alpha}\rho_d^{\beta(p-q-1)}\overline{Q}_{q,p-1}(\overline{v}^{\alpha})\Big)
\label{eq:backward_dob_t1}
\end{equation}
and 
\begin{equation}
c\Big(\prod_{s=q}^{p-2}\lambda_s\Big)^{-2}
(\eta_q\otimes\eta_q)\Big(v^{\alpha}
\exp\Big\{-(p-q-1)\Big[\frac{\delta d(1-\beta)}{2}-2b_{\underline{d}}\Big] + \frac{3\delta d}{2}\Big\}\overline{v}^{\alpha}\Big)
\label{eq:backward_dob_t2}
\end{equation}

We start with \eqref{eq:backward_dob_t1}:
$$
c\rho_d^{\beta(p-q-1)} \frac{\eta_q(v^{\alpha}Q_{q,p-1}(v^{\alpha}))}{\prod_{s=q}^{p-2}\lambda_s}
\frac{\eta_q(Q_{q,p-1}(v^{\alpha}))}{\prod_{s=q}^{p-2}\lambda_s}
$$
By \cite[Theorem 1]{whiteley} we have the upper-bound
$$
c\rho_d^{\beta(p-q-1)} \eta_q(v^{\alpha}[h_{q,p-1}\eta_{p-1}(v^{\alpha}) + \tilde{\rho}^{\beta(p-q-1)}\mu(v^{\alpha})c_{\mu}v^{\alpha}])
\eta_q([h_{q,p-1}\eta_{p-1}(v^{\alpha}) + \tilde{\rho}^{\beta(p-q-1)}\mu(v^{\alpha})c_{\mu}v^{\alpha}])
$$
where $c<\infty$, $\tilde{\rho}\in(0,1)$ that does not depend on $d$.
As $\sup_{q\geq 1}\sup_{1\leq p \leq q+1}\|h_{q,p-1}\|_{v^{\alpha}}<+\infty$ by \cite[Proposition 2]{whiteley} 
and by Proposition 1 of \cite{whiteley} we have that $\sup_{p\geq 1} \|\eta_{p-1}(v^{\alpha})\|_{v^{\alpha}}<+\infty$
we have the upper-bound on 
\eqref{eq:backward_dob_t1}
$$
c\rho_d^{\beta(p-q-1)}\eta_q(v^{2\alpha})\eta_q(v^{\alpha})
$$
where again, $c$ does not depend on $d$. Noting that $\alpha\in(0,1/2)$ and applying Jensen and again \cite{whiteley} Proposition 1,  we have the upper-bound $c \rho_d^{\beta(p-q-1)}$ for $c$ independent of $d$.

Now, turning to \eqref{eq:backward_dob_t2}, by Proposition 2 of \cite{whiteley} $\inf_{p\geq 0} \lambda_p  = \underline{\lambda} > 0$, 
and, by the above argument $\sup_{p\geq 1} \|\eta_{p-1}(v^{2\alpha})\|_{v^{\alpha}}<+\infty$
hence we have the upper-bound on 
 \eqref{eq:backward_dob_t2}
$$
c\exp\Big\{-(p-q-1)\Big[\frac{\delta d(1-\beta)}{2}-2b_{\underline{d}} + 2\log(\underline{\lambda})\Big] + \frac{3\delta d}{2}\Big\}.
$$
Thus combining this upper-bound, with that of  $c \rho_d^{\beta(p-q-1)}$ on \eqref{eq:backward_dob_t1} and recalling that the sum of these terms upper-bounded
the L.H.S.~of \eqref{eq:backward_dob_bd1}, we have established that
$$
\frac{M_{p:q}(f)(x) - M_{p:q}(f)(z)}{v(x)^{\alpha}v(z)^{\alpha}} \leq c\Big[
\rho_d^{\beta(p-q-1)} + 
 \exp\Big\{-(p-q-1)\Big[\frac{\delta d(1-\beta)}{2}-2b_{\underline{d}} + 2\log(\underline{\lambda})\Big] + \frac{3\delta d}{2}\Big\}
\Big]
$$
where  $q\in\{0,\dots,p-2\}$, $c$ does not depend upon $d$ and $d>\underline{d}$ is arbitrary. As $d$ is arbitrary, we can conclude that for $d$ large enough,
there is a $\rho\in(0,1)$ such that for any $q\in\{0,\dots,p-2\}$
$$
\sup_{(x,z)\in\overline{\mathsf{X}}}\sup_{|f|\leq v^{\alpha}}
\frac{|M_{p:q}(f)(x) - M_{p:q}(f)(z)|}{v(x)^{\alpha}v(z)^{\alpha}}
\leq c\rho^{(p-q-1)}
$$
with $c<+\infty$.

For the case $q=p-1$ we have, by definition of the backward kernel
$$
\frac{M_{p,\eta_{p-1}}(f)(x) - M_{p,\eta_{p-1}}(f)(z)}{v(x)^{\alpha}v(z)^{\alpha}} = 
\frac{\eta_{p-1}(fQ_p(\cdot,x))}{\eta_{p-1}(Q_p(\cdot,x))v(x)^{\alpha}v(z)^{\alpha}} - 
\frac{\eta_{p-1}(fQ_p(\cdot,z))}{\eta_{p-1}(Q_p(\cdot,z))v(x)^{\alpha}v(z)^{\alpha}}.
$$
By (A\ref{hyp:6}) as $Q_p(y,x)/\eta_{p-1}[Q_{p}(\cdot,x)]\in\mathscr{L}_{\overline{v}^{\alpha}}$
and as $v\geq 1$, we have 
$$
\frac{M_{p,\eta_{p-1}}(f)(x) - M_{p,\eta_{p-1}}(f)(z)}{v(x)^{\alpha}v(z)^{\alpha}} \leq
c \eta_{p-1}(v^{2\alpha}).
$$
Using  $\alpha\in(0,1/2)$  and \cite[Proposition 1]{whiteley} we can conclude.
\end{proof}

\end{document}